\theoremstyle{plain}
\newtheorem{theorem}{Theorem}[section]
\newtheorem{cor}[theorem]{Corollary}
\newtheorem{def-thm}[theorem]{Definition-Theorem}
\newtheorem{lemma}[theorem]{Lemma}
\newtheorem*{tha}{Theorem A}
\theoremstyle{definition}
\newtheorem{remark}[theorem]{Remark}
\begin{document}
\title[A refined   form of  the  second main theorem]{A refined   form of  the  second main theorem  on complete non-positively curved K\"ahler manifolds}
\author[X. Dong]
{Xianjing Dong}

\address{School of Mathematical Sciences \\ Qufu Normal University \\ Qufu, Jining, Shandong, 273165, P. R. China}
\email{xjdong05@126.com}


\subjclass[2010]{32H30} 
\keywords{Nevanlinna theory;  second main theorem; K\"ahler manifolds; non-positively curved; Green function.}
\date{}
\maketitle \thispagestyle{empty} \setcounter{page}{1}

\begin{abstract}  How to devise   a  second main theorem with best  error terms is a central   problem   in the study of  Nevanlinna theory. 
However, it seems   difficult  to be done 
for a general  non-positively curved K\"ahler manifold. 
Based on the  work of  A. Atsuji in Nevanlinna theory, 
we present   a refined form of 
 the second main theorem  of meromorphic mappings 
 on  a general complete  K\"ahler manifold 
   with non-positive sectional curvature using  a good   estimate. 
      This result  improves   
       the   error terms in the second main theorem  obtained   by A. Atsuji in 2018. 
  \end{abstract}

\medskip

\setlength\arraycolsep{2pt}
\medskip

\section{Introduction}

The study of Nevanlinna theory on complete  K\"ahler manifolds with non-positive sectional curvature is  an important   and  difficult task. 
As early as  the 1990s, A. Atsuji \cite{at1995} began  his outstanding research  work  in this direction   by developing  the probabilistic technique initialized by T. K.  Carne \cite{carne}. 
Later, he wrote a series of papers (see \cite{at2005, at2008a, at2008b, at2010, at2018a, at2018b}) 
concerning  the second main theorem of meromorphic functions on such manifolds.  In Nevanlinna theory, 
how to establish     a second main theorem with optimal  
 error terms  is a central   problem. 
    During   a long time,  Atsuji   made   efforts  to    improve  the  error terms for a 
       complete non-positively curved K\"ahler manifold  
       (see  \cite{at2005, at2008b, at2018a}). 
           In 2018,  
he gave  a  main error term $O(-\kappa(r)r^2)$   in \cite{at2018a}, 
in which  $\kappa(r)$ is the lower bound of   Ricci curvature on a geodesic ball with radius $r$ 
(see  (\ref{ricci}) below).   
 Following  Atsuji, 
  the  author \cite{Dong}  also extended      
  Carlson-Griffiths' second main theorem  \cite{gri}
  to  a  complete non-positively curved 
     K\"ahler manifold with 
           the  same  error term $O(-\kappa(r)r^2).$
       To make  it  more  clear,  we  would  give  a  brief introduction to  Atsuji-Dong's   second main theorem (see \cite{at2018a, Dong}) as follows. 

Let $M$ be an $m$-dimensional  complete non-compact K\"ahler manifold with non-positive sectional curvature. 
Let $\mathscr R$ stand for   the Ricci form of $M.$   Fix a reference point $o\in M.$  Denote by  $B(r)$   the geodesic ball centered at $o$ with radius $r$ in $M.$   Set 
\begin{equation}\label{ricci}
 \kappa(r)=\frac{1}{2m-1}\inf_{x\in B(r)}R(x),
 \end{equation}
   where 
    $$R(x)=\inf_{X\in T_xM, \ \|X\|=1}{\rm{Ric}}(X, X)$$
    is   the pointwise lower bound of Ricci curvature of $M.$
 Let $X$ be a complex projective manifold with $\dim X\leq \dim M,$ over which one   puts 
     a Hermitian positive  line bundle $(L, h).$ Let $f: M\to X$ be a meromorphic mapping. Let $D\in|L|,$ where $|L|$ is the complete linear system of $L.$ 
       Referring to  Section 2,   we have the Nevanlinna's functions $T_f(r, L), m_f(r, D), \overline{N}_f(r,D), T_f(r, K_X)$ and $T(r, \mathscr R)$  on $B(r).$

  \begin{tha}[Atsuji-Dong]\label{th1}  Let $M$ be a  
     complete non-compact  K\"ahler manifold   with non-positive sectional curvature.  
Let $X$ be a smooth complex projective variety  of complex dimension not greater than  that  of $M.$
 Let $D\in|L|$ be a reduced divisor of simple normal crossing type, where $L$ is a positive line bundle over $X.$   Let  $f:M\rightarrow X$ be a  differentiably non-degenerate meromorphic mapping.  Then  for any  $\delta>0,$ there exists a subset $E_\delta\subset(1, \infty)$ of finite Lebesgue measure such that 
        \begin{eqnarray*}
&&T_f(r,L)+T_f(r, K_X)+T(r, \mathscr R) \\
&\leq& \overline N_f(r,D)+O\left(\log^+ T_{f}(r,L)-\kappa(r)r^2+\delta\log r\right)
         \end{eqnarray*}
    holds for all  $r>1$ outside $E_\delta,$ where $\kappa(r)$ is defined by $(\ref{ricci}).$     
\end{tha}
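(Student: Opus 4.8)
The plan is to transplant the Carlson--Griffiths--King construction to the K\"ahler setting and to run it through the Green-function (equivalently, Brownian-motion) machinery of Atsuji, isolating the single genuinely analytic term and controlling it by a sharp ``good estimate.'' First I would write the reduced simple normal crossing divisor as $D=\sum_j D_j$, choose defining sections $s_j\in H^0(X,\mathcal O(D_j))$ with Hermitian metrics $h_j$ normalized so that $\|s_j\|_{h_j}<1$, and fix a smooth volume form $\Omega$ on $X$. From these I would build the singular volume form
\[
\Psi=\frac{\Omega}{\prod_j\|s_j\|_{h_j}^2\bigl(\log\|s_j\|_{h_j}^2\bigr)^2},
\]
and set $\xi:=\log\bigl(f^*\Psi\wedge\omega_M^{m-n}/\omega_M^{m}\bigr)$, where $m=\dim M$, $n=\dim X$ and $\omega_M$ is the K\"ahler form of $M$. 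Differentiable non-degeneracy guarantees that $\xi$ is finite almost everywhere, with only integrable logarithmic singularities along $f^{-1}(D)$ and the ramification locus. A $\partial\bar\partial$-computation built on the Poincar\'e--Lelong formula then gives, as currents,
\[
dd^c\xi=f^*c_1(K_X)+f^*c_1(L)+\mathscr R-[f^*D]+\rho,
\]
where $[f^*D]$ is the integration current over $f^{-1}(D)$ and $\rho$ collects the bounded contribution of the double-logarithm factors and the (effective, hence favorable) ramification.

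Next I would apply the Green--Jensen formula. Let $g_r(o,\cdot)$ denote the Green function of the geodesic ball $B(r)$ with pole at $o$, and let $d\pi_r$ be the associated harmonic measure on $\partial B(r)$, i.e.\ the exit distribution of Brownian motion started at $o$. Pairing the displayed identity with $g_r$ and tracing against $\omega_M$, the left-hand side of the theorem emerges directly:
\[
\int_{\partial B(r)}\xi\,d\pi_r-\xi(o)=T_f(r,K_X)+T_f(r,L)+T(r,\mathscr R)-\overline N_f(r,D)+O(1).
\]
Here the passage from the full current $[f^*D]$ to the truncated counting function $\overline N_f(r,D)$ is exactly the Carlson--Griffiths mechanism: because $D$ is reduced, the double-logarithmic Poincar\'e-type metric sees each component with multiplicity one, so only the level-one counting survives. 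When $n<m$, the factor $\omega_M^{m-n}$ needed to reach top degree introduces additional curvature terms of $M$; these are bounded by the sectional curvature and, being non-positive, would be absorbed into the error.

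It then remains to bound the boundary term $\int_{\partial B(r)}\xi\,d\pi_r$ from above, and this is the heart of the matter. Since $\log$ is concave and $d\pi_r$ is a probability measure, Jensen's inequality yields
\[
\int_{\partial B(r)}\xi\,d\pi_r\le\log\int_{\partial B(r)}\frac{f^*\Psi\wedge\omega_M^{m-n}}{\omega_M^{m}}\,d\pi_r.
\]
I would estimate the inner average by a logarithmic-derivative argument: through the co-area formula and the first main theorem it is comparable to a normalized radial derivative of the characteristic $T_f(r,L)$, so successive applications of the calculus lemma (Borel's growth lemma) dominate it, outside an exceptional set $E_\delta\subset(1,\infty)$ of finite Lebesgue measure, by $O\bigl(\log^+T_f(r,L)+\delta\log r\bigr)$. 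Finally, the discrepancy between $d\pi_r$ and the normalized Riemannian area of $\partial B(r)$, together with the drift produced by running these estimates over a curved base, is controlled by the Laplacian comparison theorem: since non-positive sectional curvature forces $\mathrm{Ric}\ge(2m-1)\kappa(r)$ throughout $B(r)$, comparison with the space form of curvature $\kappa(r)$ bounds $\Delta\,r(x)$ from above, and radial integration over $[0,r]$ contributes precisely the curvature error $O(-\kappa(r)r^2)$. Assembling these estimates would give the stated inequality.

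The hard part will be the good estimate for the inner average in the last paragraph. Over $\mathbb{C}^m$ this reduces to the classical lemma on the logarithmic derivative, proved with the explicit spherical Poisson kernel; on a general non-positively curved K\"ahler manifold no such kernel is available and $d\pi_r$ is known only implicitly, so one must substitute heat-kernel and exit-time estimates for the Euclidean computation. The refinement announced in the abstract is precisely the claim that this average admits a bound with the optimal curvature dependence $-\kappa(r)r^2$ rather than a cruder majorant; securing it requires marrying the probabilistic exit-time estimates with the Laplacian comparison theorem and tracking the constants carefully through the calculus lemma.
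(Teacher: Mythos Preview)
First, note that the paper does not prove Theorem~A; it is quoted as background from \cite{at2018a,Dong}. The paper's own argument is for the sharper Theorem~\ref{main1}, whose proof in Section~4 would, with a weaker calculus lemma, also yield Theorem~A. Your proposal should be compared with that argument.

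Your overall architecture is sound and is a recognized alternative to the paper's route. The paper builds the singular volume form $\Phi=\wedge^n c_1(L,h)/\prod_j\|s_j\|^2$ \emph{without} double-logarithm factors, then uses the simple-normal-crossing structure to bound $\log^+\xi$ pointwise by a finite sum of terms $\log^+(\|\nabla f_{\lambda k}\|/|f_{\lambda k}|)$ in local coordinates (inequality~(\ref{bbd})), to each of which the Logarithmic Derivative Lemma (Theorem~\ref{log}) is applied. You instead insert the double-log factors into $\Psi$, apply Jensen's inequality once to $\int_{\partial B(r)}\xi\,d\pi_r$, and then invoke the calculus lemma on the resulting density. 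Both paths are standard in higher-dimensional Nevanlinna theory; yours is closer to the original Carlson--Griffiths--King treatment, the paper's is closer to Noguchi's local-coordinate method. (Minor slip: with the double-log form, $dd^c\xi$ already produces the \emph{reduced} current $[\mathrm{Red}(f^*D)]$, not $[f^*D]$ followed by a separate truncation step.)

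Where your proposal is genuinely imprecise is the source of the curvature error. You attribute it to ``the discrepancy between $d\pi_r$ and the normalized Riemannian area of $\partial B(r)$'' handled by Laplacian comparison, with radial integration producing $-\kappa(r)r^2$. That is not how it enters here. In the paper's framework the Calculus Lemma (Theorem~\ref{cal}) already works entirely with the harmonic measure; the curvature dependence sits inside the factor $K(r,\delta)$, which is built from the solution $G$ of the Jacobi equation $G''+\kappa(t)G=0$ via Atsuji's \emph{lower} bound on $g_r(o,x)$ (Lemma~\ref{zz}). The comparison estimate $G(r)\le\chi(\sqrt{-\kappa(r)},r)$ of Lemma~\ref{est} then gives $\log^+K(r,\delta)=O(\sqrt{-\kappa(r)}r)$ (Corollary~\ref{cal1}); a cruder estimate on $G$ yields the $-\kappa(r)r^2$ of Theorem~A. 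Your last paragraph has this backwards: $-\kappa(r)r^2$ is the crude bound being \emph{improved upon}, not the ``optimal curvature dependence'' announced in the abstract.
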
  

Based on  an upper estimate of the first existing  time  for $B(r)$ of   Brownian motion  started  at $o$ in $M$,  the curvature term  $T(r, \mathscr R)$ is bounded  from below by $O(\kappa(r)r^2)$ (see \cite{Dong}, Lemma 5.6).
It is clear   that  Theorem A extends   the classical second main theorem for $\mathbb C^m.$
However, this theorem   fails to recover the classical second main theorem for the unit complex  ball $\mathbb B$ in  $\mathbb C^m,$ 
because    its main error term  is $O(r^2),$ while the optimal   main error term is $O(r)$ under the Poincar\'e metric of  sectional curvature -1. 
Therefore, the main error term $O(-\kappa(r)r^2)$ in Theorem A  is  rough. 

In this paper, our  aim   is to  refine    this  second main theorem, i.e.,  Theorem A.  By setting up  a new calculus lemma  (see  Theorem \ref{cal}),  
 we receive  a good main error term $O(\sqrt{-\kappa(r)}r)$ in the second main theorem. 
In what follows, we  introduce the main results in the present paper. 

  \begin{theorem}\label{main1}  Let $M$ be a  
     complete non-compact  K\"ahler manifold   with non-positive sectional curvature.  
Let $X$ be a smooth complex projective variety  of complex dimension not greater than  that  of $M.$
 Let $D\in|L|$ be a reduced divisor of simple normal crossing type, where $L$ is a positive line bundle over $X.$   Let  $f:M\rightarrow X$ be a  differentiably non-degenerate meromorphic mapping.  Then  for any  $\delta>0,$ there exists a subset $E_\delta\subset(1, \infty)$ of finite Lebesgue measure such that 
        \begin{eqnarray*}
&&T_f(r,L)+T_f(r, K_X)+T(r, \mathscr R) \\
&\leq& \overline N_f(r,D)+O\left(\log^+ T_{f}(r,L)+\sqrt{-\kappa(r)}r+\delta\log r\right)
         \end{eqnarray*}
    holds for all  $r>1$ outside $E_\delta,$  where $\kappa(r)$ is defined by $(\ref{ricci}).$          
\end{theorem}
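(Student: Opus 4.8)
The plan is to retain the overall architecture of the proof of Theorem~A (Atsuji--Dong) and to replace only the calculus estimate used there by the sharper calculus lemma, Theorem~\ref{cal}. First I would reproduce the fundamental differential inequality underlying the second main theorem on $M$. Working with the Green function $g(o,\cdot)$ of the geodesic ball $B(r)$---equivalently, with the heat kernel and the first exit time of Brownian motion started at $o$, as in Atsuji's probabilistic approach---and combining the First Main Theorem with the resulting logarithmic derivative estimate, one obtains, for almost every $r>1$, an inequality of the form
\begin{equation*}
T_f(r,L)+T_f(r,K_X)+T(r,\mathscr R)\leq \overline N_f(r,D)+\frac{1}{2}\log\frac{d}{dr}\left(A(r)\frac{dT_f(r,L)}{dr}\right)+\Xi(r),
\end{equation*}
where $A(r)$ is the area of the geodesic sphere $\partial B(r)$ and $\Xi(r)$ is a curvature remainder arising from the Ricci term together with the deviation of the geometry of $B(r)$ from the Euclidean model.

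The decisive geometric input is the Laplacian comparison theorem. Since $\mathrm{Ric}\geq (2m-1)\kappa(r)$ on $B(r)$, the distance function $\rho$ to the reference point $o$ satisfies
\begin{equation*}
\Delta\rho\leq (2m-1)\sqrt{-\kappa(r)}\,\coth\!\left(\sqrt{-\kappa(r)}\,\rho\right),
\end{equation*}
which is asymptotically $(2m-1)\sqrt{-\kappa(r)}$ for large $\rho$. In Theorem~A the remainder $\Xi(r)$ was bounded crudely, essentially by the volume-weighted integral of $-\kappa$, producing the term $-\kappa(r)r^2$. The refinement is to integrate the $\coth$-profile exactly, so that $\Xi(r)$ is controlled by $\int_1^r\sqrt{-\kappa(t)}\,dt=O\!\left(\sqrt{-\kappa(r)}\,r\right)$ instead of by the double integration yielding $O(-\kappa(r)r^2)$.

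I would then invoke the calculus lemma, Theorem~\ref{cal}, to pass from this differential inequality to a pointwise bound. The lemma converts the term $\tfrac12\log\tfrac{d}{dr}\!\left(A(r)\tfrac{dT_f}{dr}\right)$ into $O\!\left(\log^+ T_f(r,L)+\delta\log r\right)$ outside an exceptional set $E_\delta\subset(1,\infty)$ of finite Lebesgue measure, via the standard Borel-type iteration. Adding the curvature estimate $\Xi(r)=O(\sqrt{-\kappa(r)}\,r)$ from the previous step produces exactly the asserted error term.

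The hard part will be the precise interplay, inside Theorem~\ref{cal}, between the Borel-type estimate on the characteristic function and the curvature weight coming from the $\coth$ comparison. One must verify that the effective growth governing $\Xi(r)$ is the integral $\int_1^r\sqrt{-\kappa(t)}\,dt$---producing $\sqrt{-\kappa(r)}\,r$---rather than a double integration producing $-\kappa(r)r^2$, and that the refined bookkeeping does not enlarge the finite-measure exceptional set $E_\delta$. Both points rest on handling the exact $\coth$ form of the Laplacian comparison inside the iterative argument of the calculus lemma, which is the genuinely new technical content of Theorem~\ref{cal}.
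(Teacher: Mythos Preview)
Your high-level plan---retain the Atsuji--Dong skeleton and replace the crude calculus estimate by Theorem~\ref{cal}---is correct, but the mechanism you describe for how the term $\sqrt{-\kappa(r)}\,r$ appears is not the one the paper uses, and the ``differential inequality'' you write down does not match the paper's architecture.

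The paper never produces an inequality of the shape
\[
T_f(r,L)+T_f(r,K_X)+T(r,\mathscr R)\leq \overline N_f(r,D)+\tfrac{1}{2}\log\frac{d}{dr}\!\left(A(r)\frac{dT_f(r,L)}{dr}\right)+\Xi(r)
\]
with a separate curvature remainder $\Xi(r)$ to be estimated via the $\coth$ profile of $\Delta\rho$. Instead, it works throughout with the Green-weighted integrals $\int_{B(r)}g_r(o,x)(\cdot)\,dv$ and the Jensen--Dynkin formula. Concretely, one introduces the Carlson--Griffiths singular volume form $\Phi=\wedge^nc_1(L,h)\big/\prod_j\|s_j\|^2$, sets $f^*\Phi\wedge\alpha^{m-n}=\xi\,\alpha^m$, and obtains from the current inequality
\[
\tfrac{1}{4}\int_{B(r)}g_r(o,x)\Delta\log\xi\,dv\;\geq\; T_f(r,L)+T_f(r,K_X)+T(r,\mathscr R)-\overline N_f(r,D).
\]
The right-hand side is then bounded by expanding $\log^+\xi$ in a simple-normal-crossing cover of $X$, which reduces everything to terms of the form $m\!\left(r,\|\nabla f_{\lambda k}\|/|f_{\lambda k}|\right)$ and $\log^+\!\int_{\partial B(r)}\varrho\,d\pi_r$. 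These are handled by the Logarithmic Derivative Lemma (Theorem~\ref{log}) and Corollary~\ref{cal1}, both of which are consequences of the Calculus Lemma (Theorem~\ref{cal}).

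The crucial point you have inverted is this: the curvature term $\sqrt{-\kappa(r)}\,r$ is \emph{not} an additive remainder $\Xi(r)$ estimated separately by integrating $\coth$; it is produced \emph{inside} the Calculus Lemma itself. Theorem~\ref{cal} yields a multiplicative factor $K(r,\delta)$ built from the Jacobi solution $G$ of $G''+\kappa(t)G=0$, and the whole refinement over Theorem~A is the ODE comparison (Lemma~\ref{est}) giving $G(r)\le \chi(\sqrt{-\kappa(r)},r)$, hence $\log^+K(r,\delta)=O(\sqrt{-\kappa(r)}\,r+\delta\log r)$ (Corollary~\ref{cal1}). There is no second, independent curvature bookkeeping step. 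Your sketch therefore misses the actual new technical ingredient (Lemma~\ref{est} feeding into $K(r,\delta)$) and omits the Logarithmic Derivative Lemma and the singular volume form construction, which are what connect the left-hand side of the second main theorem to quantities the Calculus Lemma can control.
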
  

When   $M$ has   constant sectional curvature, we further show  that 
 
  \begin{cor}\label{main2}  Let $M$ be a  
     complete non-compact  K\"ahler manifold   with constant  sectional curvature $\kappa.$   
Let $X$ be a smooth complex projective variety  of complex dimension not greater than  that  of $M.$
 Let $D\in|L|$ be a reduced divisor of simple normal crossing type, where $L$ is a positive line bundle over $X.$   Let  $f:M\rightarrow X$ be a  differentiably non-degenerate meromorphic mapping.   Then  for any  $\delta>0,$ there exists a subset $E_\delta\subset(1, \infty)$ of finite Lebesgue measure such that 
        \begin{eqnarray*}
T_f(r,L)+T_f(r, K_X) 
&\leq& \overline N_f(r,D)+O\left(\log^+ T_{f}(r,L)+\sqrt{-\kappa}r+\delta\log r\right)
         \end{eqnarray*}
    holds for all  $r>1$ outside $E_\delta.$     
\end{cor}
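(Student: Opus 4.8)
The plan is to deduce Corollary \ref{main2} directly from Theorem \ref{main1} by exploiting two consequences of the constant sectional curvature hypothesis. First I would pin down $\kappa(r)$. Since $M$ has constant sectional curvature $\kappa$, its Ricci tensor is $\mathrm{Ric}=(2m-1)\kappa\,g$, so the pointwise lower bound in $(\ref{ricci})$ is the constant $R(x)\equiv(2m-1)\kappa$, whence $\kappa(r)\equiv\kappa$ for every $r$. Feeding this into Theorem \ref{main1} immediately turns the main error term $\sqrt{-\kappa(r)}\,r$ into $\sqrt{-\kappa}\,r$, so the only thing left to justify is the \emph{removal} of the curvature characteristic $T(r,\mathscr R)$ from the left-hand side.

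For that, the key observation is that constant sectional curvature makes $M$ K\"ahler--Einstein: the Ricci form is a constant multiple of the K\"ahler form $\omega$, namely $\mathscr R=(2m-1)\kappa\,\omega$. Hence $T(r,\mathscr R)=(2m-1)\kappa\,T(r,\omega)$, which is nonpositive because $\kappa\le 0$ (when $\kappa=0$ the form $\mathscr R$ vanishes and the term drops out trivially). Transferring this nonpositive quantity to the right-hand side of Theorem \ref{main1} therefore costs precisely $|T(r,\mathscr R)|=(2m-1)(-\kappa)\,T(r,\omega)$, and the corollary will follow as soon as I show that this cost is absorbed by $O(\sqrt{-\kappa}\,r)$, i.e. that $T(r,\omega)=O\big(r/\sqrt{-\kappa}\big)$.

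This last estimate is the heart of the matter, and I expect it to be the main obstacle, because a geodesic ball $B(r)$ in a space of curvature $\kappa<0$ has \emph{exponentially} growing volume, so a naive bound would be hopeless. The point is that $T(r,\omega)$ is not a volume but a Green-function average: since $\mathrm{tr}_\omega\omega$ is constant, $T(r,\omega)$ is a fixed multiple of $\int_{B(r)}G_r(o,x)\,dV(x)=\mathbb E_o[\tau_r]$, the expected first exit time from $B(r)$ of Brownian motion started at $o$, where $G_r$ is the Dirichlet Green function of $B(r)$. In constant curvature $\kappa=-a^2$ the radial part of Brownian motion has outward drift $\tfrac{2m-1}{2}\,a\coth(a\rho)\ge \tfrac{2m-1}{2}\,a$, a positive constant multiple of $a=\sqrt{-\kappa}$; comparing with a one-dimensional diffusion of constant drift then yields $\mathbb E_o[\tau_r]=O\big(r/\sqrt{-\kappa}\big)$. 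Thus the exponential volume growth is exactly compensated by the decay of $G_r$, and this is precisely the type of bound delivered by the calculus lemma, Theorem \ref{cal}, which I would invoke (or else the explicit space-form Green function) to make the estimate rigorous.

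Assembling the three steps gives $|T(r,\mathscr R)|=(2m-1)(-\kappa)\cdot O\big(r/\sqrt{-\kappa}\big)=O(\sqrt{-\kappa}\,r)$, which merges into the error term already present in Theorem \ref{main1}. Moving $T(r,\mathscr R)$ to the right-hand side therefore produces
\[
T_f(r,L)+T_f(r,K_X)\le \overline N_f(r,D)+O\big(\log^+T_f(r,L)+\sqrt{-\kappa}\,r+\delta\log r\big)
\]
for all $r>1$ outside the finite-measure set $E_\delta$ furnished by Theorem \ref{main1}, which is exactly the assertion of the corollary.
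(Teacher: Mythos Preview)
Your reduction is correct and matches the paper's strategy exactly up to the point where both of you observe $\kappa(r)\equiv\kappa$ and reduce Corollary~\ref{main2} to the single estimate $|T(r,\mathscr R)|=O(\sqrt{-\kappa}\,r)$. From there the two arguments diverge in flavor. The paper writes $|T(r,\mathscr R)|$ as a constant multiple of $\int_{B(r)}g_r(o,x)\,dv$ via the \emph{scalar} curvature (rather than the Einstein identity you use, though the two are equivalent here), then computes $g_r(o,x)$ explicitly from the space-form Green function $(\ref{greenn})$, invokes Bishop--Gromov (Lemma~\ref{comp}) to control ${\rm Area}(\partial B(t))$, and evaluates the resulting one-variable integral directly to get $O(r)$. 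Your route---recognizing $\int_{B(r)}g_r\,dv=\mathbb E_o[\tau_r]$ and bounding the exit time by comparing the radial drift $\tfrac{2m-1}{2}\sqrt{-\kappa}\coth(\sqrt{-\kappa}\,\rho)$ with a constant-drift diffusion---is a clean probabilistic substitute that avoids the explicit Green function and the volume comparison. Both arguments deliver $\mathbb E_o[\tau_r]=O(r/\sqrt{-\kappa})$ and hence $|T(r,\mathscr R)|=O(\sqrt{-\kappa}\,r)$.

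One small correction: Theorem~\ref{cal} is not the right tool to cite for the bound on $\mathbb E_o[\tau_r]$. The calculus lemma estimates a boundary integral $\int_{\partial B(r)}k\,d\pi_r$ \emph{in terms of} the Green-weighted integral $\int_{B(r)}g_r k\,dv$, not the other way around; it says nothing about the size of $\int_{B(r)}g_r\,dv$ itself. Your drift-comparison argument already stands on its own and needs no appeal to Theorem~\ref{cal}; alternatively, the ``explicit space-form Green function'' you mention in passing is precisely what the paper uses.
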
  

It is clear  that   Corollary \ref{main2} covers the classical second main theorem for $\mathbb B.$  
 Finally,  we consider a defect relation. 
  The simple defect  of $f$ with respect to $D$  is defined   by
$$ \bar\delta_f(D)=1-\limsup_{r\rightarrow\infty}\frac{\overline{N}_f(r,D)}{T_f(r,L)}.$$
Set 
$$\left[\frac{c_1(K_X^*)}{c_1(L)}\right]=\inf\left\{s\in\mathbb R: \ \eta\leq s\omega;  \  \   ^\exists\eta\in c_1(K^*_X), \ ^\exists\omega\in c_1(L) \right\}.$$
\begin{cor}  Assume the same conditions as in Theorem $\ref{main1}.$ If 
$$\limsup_{r\to\infty}\frac{\sqrt{-\kappa(r)}r-T(r,\mathscr R)}{T_f(r,L)}=0,$$ 
then 
$$\bar\delta_f(D)
\leq \left[\frac{c_1(K_X^*)}{c_1(L)}\right].
$$
\end{cor}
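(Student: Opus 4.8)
The plan is to insert the cohomological comparison encoded in $\left[c_1(K_X^*)/c_1(L)\right]$ into the second main theorem of Theorem \ref{main1}, then divide by $T_f(r,L)$ and pass to the limit. Write $s=\left[c_1(K_X^*)/c_1(L)\right]$. By the very definition of the infimum, for each $\epsilon>0$ there exist closed real $(1,1)$-forms $\eta\in c_1(K_X^*)$ and $\omega\in c_1(L)$ with $\eta\le(s+\epsilon)\omega$. Since $f^*$ preserves this inequality of $(1,1)$-forms wherever $f$ is holomorphic, and the characteristic function is the integral of $f^*(\,\cdot\,)$ against a positive kernel, it is monotone; it is also additive over line bundles and metric-independent up to $O(1)$ by the first main theorem. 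Hence $T_f(r,K_X^*)\le(s+\epsilon)T_f(r,L)+O(1)$, and using $T_f(r,K_X)=-T_f(r,K_X^*)+O(1)$ this becomes
\[
T_f(r,K_X)\ge-(s+\epsilon)\,T_f(r,L)-O(1).
\]

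First I would substitute this lower bound into Theorem \ref{main1}: for $r\notin E_\delta$,
\[
(1-s-\epsilon)\,T_f(r,L)+T(r,\mathscr R)\le\overline N_f(r,D)+O\!\left(\log^+T_f(r,L)+\sqrt{-\kappa(r)}\,r+\delta\log r\right)+O(1).
\]
The key observation is that non-positive sectional curvature gives $\kappa(r)\le0$ and $\mathscr R\le0$, whence $T(r,\mathscr R)\le0$; thus $\sqrt{-\kappa(r)}\,r-T(r,\mathscr R)$ is a sum of two nonnegative terms. Consequently the hypothesis forces this quotient to tend to $0$ as a genuine limit and, being a sum, forces $\sqrt{-\kappa(r)}\,r/T_f(r,L)\to0$ and $-T(r,\mathscr R)/T_f(r,L)\to0$ separately. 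This is exactly what absorbs the implied constant in $O(\sqrt{-\kappa(r)}\,r)$, so after dividing by $T_f(r,L)$ the entire curvature contribution tends to $0$.

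Dividing the displayed inequality by $T_f(r,L)$ and taking $\limsup$ over $r\notin E_\delta$, the remaining terms $\log^+T_f(r,L)/T_f(r,L)$ and $\delta\log r/T_f(r,L)$ vanish (using $T_f(r,L)\to\infty$ for the former, and the growth of $T_f$ or the freedom to let $\delta\to0$ for the latter), giving
\[
\limsup_{r\to\infty,\;r\notin E_\delta}\frac{\overline N_f(r,D)}{T_f(r,L)}\ge1-s-\epsilon.
\]
Dropping the restriction $r\notin E_\delta$ only enlarges the index set, so the same bound holds for the full $\limsup$; letting $\epsilon\to0$ yields $\limsup_{r\to\infty}\overline N_f(r,D)/T_f(r,L)\ge1-s$, that is, $\bar\delta_f(D)\le s$. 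The hard part will be the first step, i.e.\ converting the numerical invariant $\left[c_1(K_X^*)/c_1(L)\right]$ into the analytic estimate $T_f(r,K_X)\ge-(s+\epsilon)T_f(r,L)-O(1)$: this requires choosing metric representatives that realize the infimum up to $\epsilon$ and invoking both the metric-independence and the monotonicity of the Nevanlinna characteristic. The nonpositivity $T(r,\mathscr R)\le0$ is the secondary but essential point that makes the stated hypothesis exactly strong enough to kill the error constant.
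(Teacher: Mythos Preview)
The paper does not supply an explicit proof of this corollary; it is stated in the introduction as an immediate consequence of Theorem~\ref{main1}. Your argument is correct and is exactly the standard derivation of a defect relation from a second main theorem: pick metric representatives realizing the infimum $s=\left[c_1(K_X^*)/c_1(L)\right]$ up to $\epsilon$, use the monotonicity and metric-independence (up to $O(1)$) of $T_f$ to convert this into $T_f(r,K_X)\geq-(s+\epsilon)T_f(r,L)-O(1)$, substitute into Theorem~\ref{main1}, divide by $T_f(r,L)$, and pass to the $\limsup$.

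Your observation that $\sqrt{-\kappa(r)}\,r\geq0$ and $-T(r,\mathscr R)\geq0$ (the latter because non-positive sectional curvature forces non-positive Ricci curvature, hence $\mathscr R\leq0$) is the right way to see that the hypothesis is a genuine limit and that each summand separately is $o(T_f(r,L))$; this is indeed what kills the implicit constant in the $O(\sqrt{-\kappa(r)}\,r)$ error. The only places where you are slightly informal are (i) the assertion $T_f(r,L)\to\infty$, which for a differentiably non-degenerate map into a positive line bundle is standard but deserves a word, and (ii) the handling of the $\delta\log r$ term, where one should fix $\delta$, take the $\limsup$, and then let $\delta\downarrow0$ (noting that enlarging the index set from $(1,\infty)\setminus E_\delta$ to $(1,\infty)$ can only increase the $\limsup$, as you correctly observe). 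Neither of these is a genuine gap.
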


\section{Notations}

Let $M$ be a complete non-compact K\"ahler manifold 
 of complex dimension $m,$ with  
    K\"ahler form   $\alpha$  associated to the  K\"ahler metric $g=(g_{i\bar j})$   defined by  
 $$\alpha=\frac{\sqrt{-1}}{\pi}\sum_{i,j=1}^mg_{i\bar j}dz_i\wedge d\bar z_{j}$$
  in  holomorphic local  coordinates $z_1,\cdots,z_m.$    The Ricci form of $M$ is defined by 
$$\mathscr R=-dd^c\log\det(g_{i\bar j}),$$ 
where 
    $$dd^c=\frac{\sqrt{-1}}{2\pi}\partial\overline{\partial}.$$
\ \ \ \    Fix a reference point $o\in M.$   Let  
   $B(r)=\{x\in M: \rho(x)<r\}$ stand for  the geodesic ball centered at $o$ with radius $r$ in $M,$  where  $\rho(x)$  is  the Riemannian distance function on $M$  from $o.$ 
 Denote by     $\Delta$      the Laplace-Beltrami operator   on $M.$  Let    $g_r(o,x)$ stand for   the Green function of $\Delta/2$ for $B(r)$ with a pole at $o$ satisfying Dirichlet boundary condition. 
Then, the harmonic measure $\pi_r$ on $\partial B(r)$ (the boundary of $B(r)$) with respect to $o$ can be  expressed  as 
 $$d\pi_r=\frac{1}{2}\frac{\partial g_r(o,x)}{\partial{\vec{\nu}}}d\sigma_r,$$
  where  $\partial/\partial \vec\nu$ is the inward  normal derivative on $\partial B(r),$ $d\sigma_{r}$ is the Riemannian area element  of 
$\partial B(r).$

  Now, we   define  the Nevanlinna's functions. 
   Let $X$ be a complex projective manifold, over which one   puts 
     a Hermitian positive  line bundle $(L, h)$
    with   Chern form  $$c_1(L,h):=-dd^c\log h>0.$$ 
    
      Let $f: M\to X$ be a meromorphic mapping. The characteristic function of $f$ with respect to $L$ is defined by 
    \begin{eqnarray*}
 T_f(r, L) &=& \frac{\pi^m}{(m-1)!}\int_{B(r)}g_r(o,x)f^*c_1(L,h)\wedge\alpha^{m-1}  \\
 &=&    -\frac{1}{4}\int_{B(r)}g_r(o,x)\Delta\log(h\circ f)dv,
     \end{eqnarray*}
     where $dv$ is the Riemannian volume element of $M.$
     
   Let $s_D$ be the  canonical section  associated to $D\in|L|$  (i.e., $s_D$ is a holomorphic section of $L$ over $X$ with zero divisor $D$),  where $|L|$ is the complete linear system of $L.$  
The proximity function of $f$ with respect to $D$ is defined  by
 $$m_f(r,D)=\int_{\partial B(r)}\log\frac{1}{\|s_D\circ f\|}d\pi_r.$$
Define the  counting function  and   simple counting function of $f$ with respect to $D$ respectively  by 
   \begin{eqnarray*}
N_f(r,D)&=& \frac{\pi^m}{(m-1)!}\int_{f^*D\cap B(r)}g_r(o,x)\alpha^{m-1}, \\
\overline{N}_f(r, D)&=& \frac{\pi^m}{(m-1)!}\int_{f^{-1}(D)\cap B(r)}g_r(o,x)\alpha^{m-1}. 
 \end{eqnarray*}
Moreover,  the characteristic function of   $\mathscr R$  is defined by
     \begin{eqnarray*}
 T(r,\mathscr R)&=& \frac{\pi^m}{(m-1)!}\int_{B(r)}g_r(o,x)\mathscr R\wedge\alpha^{m-1} \\
 &=&  -\frac{1}{4}\int_{B(r)}g_r(o,x)\Delta\log\det(g_{i\bar j})dv.  \     
     \end{eqnarray*}
\ \ \ \    Jensen-Dynkin formula (see, e.g.,  \cite{at2008a, bass, Dong,  Ni}) plays a central  role, which is stated as follows. 
\begin{lemma}[Jensen-Dynkin formula]\label{dynkin} Let $\phi$ be a $\mathscr C^2$-class function on  $M$ outside a polar set of singularities at most. Assume that $\phi(o)\not=\infty.$  Then
$$\int_{\partial B(r)}\phi(x)d\pi_{r}(x)-\phi(o)=\frac{1}{2}\int_{B(r)}g_r(o,x)\Delta \phi(x)dv(x).$$
\end{lemma}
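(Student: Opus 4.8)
The plan is to read the identity as a manifestation of Green's second identity, with $g_r(o,\cdot)$ playing the role of the fundamental solution of $\tfrac12\Delta$. Recall the defining properties of the Green function: $\tfrac12\Delta_x g_r(o,x)=-\delta_o$ on $B(r)$ in the distributional sense, $g_r>0$ in the interior, $g_r\equiv 0$ on $\partial B(r)$, and $g_r(o,x)$ has the Riemannian fundamental-solution asymptotics as $x\to o$. First I would treat the clean case in which $\phi$ is of class $\mathscr C^2$ on all of $\overline{B(r)}$, and excise a small geodesic ball $B(\epsilon)$ about $o$ to avoid the pole of $g_r$. On the annulus $B(r)\setminus\overline{B(\epsilon)}$ both functions are smooth and $\Delta g_r\equiv 0$, so Green's second identity gives
\begin{equation*}
-\int_{B(r)\setminus\overline{B(\epsilon)}}g_r(o,x)\Delta\phi\,dv
=\int_{\partial B(r)}\Bigl(\phi\,\tfrac{\partial g_r}{\partial n}-g_r\,\tfrac{\partial\phi}{\partial n}\Bigr)d\sigma
+\int_{\partial B(\epsilon)}\Bigl(\phi\,\tfrac{\partial g_r}{\partial n}-g_r\,\tfrac{\partial\phi}{\partial n}\Bigr)d\sigma,
\end{equation*}
where $n$ denotes the outward normal of the annulus.

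Next I would evaluate the two boundary contributions. On the outer sphere $\partial B(r)$ the Dirichlet condition $g_r\equiv 0$ kills the second term, while $n=-\vec\nu$ there; invoking $d\pi_r=\tfrac12(\partial g_r/\partial\vec\nu)\,d\sigma_r$ converts the remaining piece into exactly $-2\int_{\partial B(r)}\phi\,d\pi_r$. On the inner sphere $\partial B(\epsilon)$ the normal $n$ points toward $o$, and Gauss's law for the fundamental solution gives the dimension-free normalization $\tfrac12\int_{\partial B(\epsilon)}(\partial g_r/\partial n)\,d\sigma\to 1$ as $\epsilon\to 0$; combined with the continuity of $\phi$ at $o$ this forces $\int_{\partial B(\epsilon)}\phi\,(\partial g_r/\partial n)\,d\sigma\to 2\phi(o)$. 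The companion term vanishes, since $\partial\phi/\partial n$ stays bounded while the product $g_r\cdot|\partial B(\epsilon)|\to 0$ (the singularity of $g_r$ is weaker than the shrinking of the surface area, whether logarithmic in real dimension $2$ or of power type in higher dimension). Letting $\epsilon\to 0$ and rearranging yields $\int_{\partial B(r)}\phi\,d\pi_r-\phi(o)=\tfrac12\int_{B(r)}g_r\Delta\phi\,dv$, which is the asserted formula for smooth $\phi$.

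The main obstacle is upgrading this to $\phi$ that is only $\mathscr C^2$ off a polar singular set $P$ (with $o\notin P$, guaranteed by $\phi(o)\neq\infty$). I would enclose $P$ in a shrinking family of open neighborhoods $U_\delta\downarrow P$ and rerun the identity on $B(r)\setminus(\overline{B(\epsilon)}\cup\overline{U_\delta})$; everything then reduces to showing that the extra flux $\int_{\partial U_\delta}\bigl(\phi\,\partial g_r/\partial n-g_r\,\partial\phi/\partial n\bigr)\,d\sigma\to 0$ as $\delta\to 0$. This is precisely where polarity enters: a set of zero capacity admits neighborhoods of arbitrarily small capacity, and the associated equilibrium potentials bound the $g_r$-weighted boundary flux, driving it to zero. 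An equivalent and perhaps cleaner route is the probabilistic one underlying the name: apply Dynkin's formula to the $\tfrac12\Delta$-diffusion $X_t$ stopped at the exit time $\tau_r$ from $B(r)$, so that $\int_{\partial B(r)}\phi\,d\pi_r=\mathbb E_o[\phi(X_{\tau_r})]$ and the occupation-time identity with Green kernel produces the right-hand side; since Brownian motion almost surely never meets a polar set, the singularities along $P$ are invisible to the process and the $\mathscr C^2$ Itô formula applies off $P$. Either argument removes the singular locus and completes the proof.
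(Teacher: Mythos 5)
The paper does not actually prove this lemma: it is quoted from the probabilistic literature (the references Atsuji, Bass, Ikeda--Watanabe, Dong cited just before the statement), and the standard proof found there is precisely your second, probabilistic route --- Dynkin's formula for the $\tfrac12\Delta$-diffusion started at $o$ and stopped at the exit time $\tau_r$ of $B(r)$, the occupation-time identity $\mathbb{E}_o\int_0^{\tau_r}h(X_s)\,ds=\int_{B(r)}g_r(o,x)h(x)\,dv(x)$, and the fact that Brownian motion almost surely never hits a polar set. So that half of your proposal reconstructs the intended argument. Your analytic route via Green's second identity is also correct for $\phi\in\mathscr C^2(\overline{B(r)})$: the signs, the flux normalization $\tfrac12\int_{\partial B(\epsilon)}(\partial g_r/\partial n)\,d\sigma=1$ forced by $\tfrac12\Delta g_r=-\delta_o$, and the vanishing of the $g_r\,\partial\phi/\partial n$ term as $\epsilon\to 0$ all check out.

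The genuinely weak point is the capacity-based excision of the singular set in the analytic route. The extra flux $\int_{\partial U_\delta}\bigl(\phi\,\partial g_r/\partial n - g_r\,\partial\phi/\partial n\bigr)\,d\sigma$ involves the boundary values of $\phi$ and $\nabla\phi$, and the hypotheses give no control on either near $P$; smallness of the capacity of $U_\delta$ by itself does not make this flux vanish. This is not a hypothetical objection: in the paper's own application of the lemma one takes $\phi=\log\xi$, which tends to $-\infty$ along analytic sets with $\|\nabla\phi\|$ blowing up, so the excision must exploit the specific logarithmic-type singularity (local integrability of $\log$ singularities, Poincar\'e--Lelong), not polarity alone. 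The probabilistic argument sidesteps this cleanly --- the process simply never visits $P$ --- which is exactly why the literature frames the lemma this way; but for completeness you should also record the localization step that upgrades the local martingale in It\^o's formula to the expectation identity (e.g., stopping at $\tau_r$ and at exit times of small neighborhoods of $P$, then passing to the limit by monotone or dominated convergence under the implicit integrability of $g_r\Delta\phi$), since that is the technical content the cited references actually carry.
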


Combining Jensen-Dynkin formula with Poincar\'e-Lelong formula (see \cite{No, ru}),  we can obtain the following first main theorem. 

\begin{theorem}[\cite{at2018a, Dong}]  Assume   that $f(o)\not\in{\rm{Supp}}D.$ Then
$$T_f(r,L)+\log\frac{1}{\|s_D\circ f(o)\|}=m_f(r,D)+N_f(r,D).$$
\end{theorem}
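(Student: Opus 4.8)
The plan is to combine the Poincar\'e--Lelong formula with the Jensen--Dynkin formula (Lemma \ref{dynkin}) applied to the function $\phi=\log\|s_D\circ f\|$. The hypothesis $f(o)\notin{\rm{Supp}}\,D$ guarantees $\phi(o)\neq\infty$, while the logarithmic singularities of $\phi$ along $f^{-1}(D)$ lie in a polar set, so Lemma \ref{dynkin} is applicable. First I would extract the translation identity already implicit in the two expressions given for $T_f(r,L)$: comparing the integral of $f^*c_1(L,h)\wedge\alpha^{m-1}$ with the integral of $\Delta\log(h\circ f)\,dv$ and using $f^*c_1(L,h)=-dd^c\log(h\circ f)$ yields, for a smooth function $u$ (and then distributionally as currents),
$$\frac{\pi^m}{(m-1)!}\,dd^c u\wedge\alpha^{m-1}=\frac14\,\Delta u\,dv.$$

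Next I would invoke Poincar\'e--Lelong. Writing $\|s_D\circ f\|^2=|s_D\circ f|^2\,(h\circ f)$ in a local trivialization and applying $dd^c$, the Poincar\'e--Lelong identity $dd^c\log|s_D\circ f|^2=f^*[D]$ together with $dd^c\log(h\circ f)=-f^*c_1(L,h)$ gives, as currents on $M$,
$$dd^c\phi=\frac12\bigl(f^*[D]-f^*c_1(L,h)\bigr),\qquad \phi=\log\|s_D\circ f\|.$$
Feeding this into the translation identity and integrating against $g_r(o,x)$ over $B(r)$, I would recognize the two resulting integrals as the counting and characteristic functions, so that
$$\frac12\int_{B(r)}g_r(o,x)\Delta\phi\,dv=\frac{\pi^m}{(m-1)!}\int_{B(r)}g_r(o,x)\bigl(f^*[D]-f^*c_1(L,h)\bigr)\wedge\alpha^{m-1}=N_f(r,D)-T_f(r,L).$$

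Applying Lemma \ref{dynkin} to $\phi$ renders the left-hand side equal to $\int_{\partial B(r)}\phi\,d\pi_r-\phi(o)$. I would then substitute the boundary term $\int_{\partial B(r)}\phi\,d\pi_r=-m_f(r,D)$ and the pole value $\phi(o)=-\log\tfrac{1}{\|s_D\circ f(o)\|}$, and rearrange
$$-m_f(r,D)+\log\frac{1}{\|s_D\circ f(o)\|}=N_f(r,D)-T_f(r,L)$$
into the asserted identity $T_f(r,L)+\log\tfrac{1}{\|s_D\circ f(o)\|}=m_f(r,D)+N_f(r,D)$.

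The main obstacle is the rigorous handling of the singularities: the identity for $dd^c\phi$ is a current identity valid across $f^{-1}(D)$, so I must verify that the smooth-function translation identity extends to currents and that Lemma \ref{dynkin} genuinely applies to the non-smooth $\phi$ whose singular locus is polar. Beyond that, the remaining care is purely bookkeeping of the normalization constants, namely tracking the factors $\tfrac12$, $\tfrac14$, and $\tfrac{\pi^m}{(m-1)!}$ together with the sign conventions $c_1(L,h)=-dd^c\log h$ and $dd^c=\tfrac{\sqrt{-1}}{2\pi}\partial\overline\partial$, so that the boundary, interior, and pole contributions assemble with the correct signs.
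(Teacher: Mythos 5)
Your proposal is correct and follows exactly the route the paper indicates for this theorem: it states that the first main theorem is obtained by "combining Jensen-Dynkin formula with Poincar\'e-Lelong formula," which is precisely your argument, and your bookkeeping of the normalizations ($dd^c=\tfrac{\sqrt{-1}}{2\pi}\partial\overline\partial$, the factor $\tfrac{\pi^m}{(m-1)!}$ versus $\tfrac14$, and the sign of $c_1(L,h)=-dd^c\log h$) is consistent with the paper's definitions of $T_f$, $m_f$, and $N_f$.
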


\section{Lemmas for Non-positive Sectional Curvature}

From now on,  assume that  $M$ is a complete non-compact K\"ahler manifold 
 with non-positive sectional  curvature.   We shall use  the same notations  given as before. 
In the study of value distribution theory, 
 we may   assume  without loss of generality  that  $M$ is simply-connected 
   for a  technical reason, 
   since one  may   consider the universal  covering $\pi: \tilde M\to M.$
 As long as   $\tilde M$  is equipped  with the pull-back metric $\tilde g$  induced from $g$ by $\pi,$  
     then $(\tilde M, \tilde g)$ keeps the  same  curvature as  one of $(M, g).$ 
 Recall that  $\kappa(r)$ is the lower bound of  the Ricci curvature of $M$ on $B(r)$  defined by  (\ref{ricci}) in Introduction.   

    Put 
  \begin{equation}\label{chi}
 \chi(s, t)=\left\{
                \begin{array}{ll}
t, \  \ & s=0; \\
  \frac{\sinh s t}{s}, \ \ & s\not=0.
              \end{array}
              \right.
               \end{equation}
 Consider  the following  Jacobi equation on $[0,\infty)$:
   \begin{equation}\label{G}
 G''+\kappa(t)G=0; \ \ \ \    G(0)=0, \ \ \ \  G'(0)=1.
   \end{equation}
 Note that this equation  has a unique  continuous solution. 

We give two-sided estimates of $G(t)$ as follows. 
\begin{lemma}\label{est} For $t\geq0,$ we have 
   $$ \chi(0, t)\leq G(t)\leq  \chi\left(\sqrt{-\kappa(t)}, t\right),$$
   where $\chi(s,t)$ is defined by $(\ref{chi}).$
\end{lemma}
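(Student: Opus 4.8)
The plan is to read Lemma \ref{est} as two Sturm-type comparison statements for the Jacobi equation (\ref{G}), exploiting two structural facts: that $\kappa(t) \le 0$ for all $t$ (non-positive curvature), and that $t \mapsto \kappa(t)$ is non-increasing, since $\kappa(r)$ is the infimum of a pointwise curvature quantity over the geodesic ball $B(r)$, which grows with $r$. Both bounds will then follow from a single comparison principle applied on $[0,t]$ with two different choices of comparison coefficient.

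First I would record that the solution $G$ of (\ref{G}) is strictly positive on $(0,\infty)$. Near the origin $G(t) \approx t > 0$; if $G$ had a first zero at some $t_1 > 0$, then on $(0,t_1)$ one would have $G > 0$ and hence $G'' = -\kappa G \ge 0$, so $G$ would be convex on $[0,t_1]$ with $G(0) = G(t_1) = 0$, forcing $G \le 0$ there, a contradiction. Thus $G > 0$, and in fact $G$ is convex on all of $[0,\infty)$.

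The core is a single comparison: if $u, v > 0$ on $(0,T]$ solve $u'' + au = 0$ and $v'' + bv = 0$ with $u(0) = v(0) = 0$ and $u'(0) = v'(0) = 1$, then the Wronskian $W = u'v - uv'$ satisfies $W(0) = 0$ and $W' = (b-a)uv$. Since $(u/v)' = W/v^2$ and $u/v \to 1$ as $s \to 0^+$, the sign of $b-a$ controls the monotonicity of $u/v$ and hence its position relative to $1$: one gets $u \ge v$ when $a \le b$ and $u \le v$ when $a \ge b$. I would apply this twice. For the lower bound, compare $G$ (coefficient $\kappa(s) \le 0$) with $v(s) = s = \chi(0,s)$ (coefficient $0$); since $\kappa(s) \le 0$, the comparison yields $G(s) \ge s$, in particular at $s = t$. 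For the upper bound, freeze the coefficient at its endpoint value and compare $G$ with $v(s) = \chi(\sqrt{-\kappa(t)}, s)$, the solution of $v'' + \kappa(t)v = 0$ with the same initial data; the monotonicity of $\kappa$ gives $\kappa(s) \ge \kappa(t)$ on $[0,t]$, so the comparison yields $G(s) \le \chi(\sqrt{-\kappa(t)}, s)$ on $[0,t]$, and evaluating at $s = t$ produces the desired estimate.

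The main obstacle is really a point of care rather than difficulty, and it lies in the upper bound: the asserted bound involves $\kappa$ evaluated only at the endpoint $t$, whereas (\ref{G}) carries the variable coefficient $\kappa(s)$ over the whole interval. The comparison against the frozen coefficient $\kappa(t)$ is legitimate precisely because $\kappa$ is non-increasing, so that $\kappa(s) \ge \kappa(t)$ throughout $[0,t]$; isolating and justifying this monotonicity is the key step. A secondary technical point is the limited regularity of $\kappa$, since an infimum need not be continuous; I would therefore phrase the Wronskian computation with $W$ absolutely continuous, so that $W' = (b-a)uv$ holds almost everywhere, the continuous solution guaranteed after (\ref{G}) making all boundary evaluations legitimate.
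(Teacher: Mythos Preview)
Your proposal is correct and follows essentially the same route as the paper: a Sturm-type comparison against the zero-coefficient solution for the lower bound, and against the constant-coefficient solution with coefficient frozen at $\kappa(t)$ for the upper bound, the latter justified by the monotonicity $\kappa(s)\ge\kappa(t)$ on $[0,t]$. The only difference is cosmetic: you spell out the Wronskian mechanism and the positivity of $G$, whereas the paper simply invokes the standard ODE comparison argument with a reference.
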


\begin{proof}   Treat   the following  Jacobi equation on $[0,\infty)$:
 $$H_1''=0; \ \ \ \    H_1(0)=0, \ \ \ \  H_1'(0)=1,$$
which is uniquely solved by  $H_1(t)=t.$ Since  $\kappa(t)\leq0$ for $t\geq0, $   the standard comparison argument in ODEs  shows that   
$$G(t)\geq H_1(t)=t=\chi(0,t).$$
Fix any  number $t_0>0.$ It is clear  that $\kappa(t)\geq\kappa(t_0)$ for $0\leq t\leq t_0.$
Treat   the following initial value  problem on $[0, t_0]$:
 $$H_2''+\kappa(t_0)H_2=0; \ \ \ \    H_2(0)=0, \ \ \ \  H_2'(0)=1,$$
which gives a unique solution $H_2(t)=\chi(\sqrt{-\kappa(t_0)}, t).$ Applying  the standard comparison argument (see \cite{P1}, Theorems 1, 2 for instance), 
we deduce that 
 $$G(t)\leq H_2(t)=\chi\left(\sqrt{-\kappa(t_0)},t\right)$$
for $0\leq t\leq t_0.$  In particular,  
  $G(t_0)\leq \chi(\sqrt{-\kappa(t_0)},t_0).$ 
Using the arbitrariness  of $t_0,$ we deduce that 
    $$G(t)\leq \chi\left(\sqrt{-\kappa(t)},t\right)$$ for $t\geq0.$ This completes the proof. 
\end{proof}

 By using stochastic calculus,  Atsuji obtained   a lower bound of  $g_r(o,x).$  
    
    \begin{lemma}[\cite{at2018a}]\label{zz} Let $\eta>0$ be any  number. Then,  there exists  a constant $c_1>0$ such that
  $$g_r(o,x)\geq c_1\frac{\displaystyle\int_{\rho(x)}^rG(t)^{1-2m}dt}{\displaystyle\int_{\eta}^rG(t)^{1-2m}dt}$$
  holds for all $x\in B(r)\setminus \overline{B(2\eta)}$ with $r>3\eta.$ In particular, if $M$ is non-parabolic, then 
  there exists a constant $c_2>0$ such that 
  $$g_r(o,x)\geq c_2\int_{\rho(x)}^rG(t)^{1-2m}dt$$
  holds for all $x\in B(r),$ where $G(t)$ is defined by $(\ref{G}).$
  \end{lemma}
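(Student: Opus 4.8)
The plan is to compare $g_r(o,\cdot)$ with the explicit radial model function
$$\phi(x)=\int_{\rho(x)}^{r}G(t)^{1-2m}\,dt,$$
built from the Jacobi solution $G$ of $(\ref{G})$, which carries exactly the boundary and singular behaviour we expect of the Green function. Since $M$ is simply connected with non-positive curvature, $\rho$ is smooth on $M\setminus\{o\}$, there is no cut locus, and $|\nabla\rho|=1$ there, so $\phi$ is smooth away from $o$. For a radial function $\Delta\phi=\phi''(\rho)+\phi'(\rho)\,\Delta\rho$, and a direct computation gives
$$\Delta\phi=G(\rho)^{1-2m}\bigl[(2m-1)G'(\rho)/G(\rho)-\Delta\rho\bigr].$$
The analytic heart of the argument is the Laplacian comparison theorem in variable-curvature form. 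Along any unit-speed geodesic issuing from $o$, the radial Ricci curvature at distance $t$ is bounded below by $(2m-1)\kappa(t)$ (this is precisely how $\kappa$ is normalised in $(\ref{ricci})$), and since $G$ is the associated Jacobi solution with $G>0$ on $(0,\infty)$ by Lemma \ref{est}, the comparison of \cite{P1} yields $\Delta\rho\le(2m-1)G'(\rho)/G(\rho)$. Hence $\Delta\phi\ge0$: the model function $\phi$ is subharmonic on $B(r)\setminus\{o\}$, and therefore $g_r-c\phi$ is superharmonic there for every constant $c>0$.

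For the first inequality I would run the minimum principle on the annulus $A=B(r)\setminus\overline{B(2\eta)}$. Put $\psi=\phi\big/\int_{\eta}^{r}G(t)^{1-2m}\,dt$, a positive multiple of $\phi$, so $\psi$ is subharmonic, vanishes on $\partial B(r)$, and satisfies $\psi<1$ on $\partial B(2\eta)$ because $2\eta>\eta$. By monotonicity of Dirichlet Green functions in the domain, $g_r\ge g_{3\eta}$ on $B(3\eta)$ whenever $r>3\eta$; since $g_{3\eta}$ is positive and continuous on the compact sphere $\partial B(2\eta)$, this furnishes a constant $c_1:=\min_{\partial B(2\eta)}g_{3\eta}>0$, independent of $r$, with $g_r\ge c_1$ on $\partial B(2\eta)$. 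Then $g_r-c_1\psi$ is superharmonic on $A$ and nonnegative on $\partial A=\partial B(r)\cup\partial B(2\eta)$: it equals $0$ on the outer sphere and is at least $c_1(1-\psi)\ge0$ on the inner one. The minimum principle then gives $g_r\ge c_1\psi$ throughout $A$, which is the stated bound.

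For the refined estimate when $M$ is non-parabolic I would instead compare on the punctured balls $B(r)\setminus\overline{B(\varepsilon)}$ and let $\varepsilon\to0$, the one extra issue being the matching of singularities at $o$. Near $o$ the metric is smooth and $G(t)=t+O(t^3)$, so $\phi(\rho)\sim\rho^{2-2m}/(2m-2)$ (logarithmic when $m=1$), while the Green function has the universal local singularity $g_r(o,x)\sim C_m\rho^{2-2m}$ with $C_m$ depending only on the dimension and the germ of the metric at $o$, hence independent of $r$. Choosing $c_2>0$ small enough (so that the leading coefficient $C_m$ dominates $c_2/(2m-2)$) forces $g_r-c_2\phi\ge0$ on $\partial B(\varepsilon)$ for all sufficiently small $\varepsilon$; as this difference is superharmonic and vanishes on $\partial B(r)$, the minimum principle on the annulus followed by $\varepsilon\to0$ yields $g_r\ge c_2\phi$ on $B(r)\setminus\{o\}$ with $c_2$ uniform in $r$. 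Non-parabolicity enters through the volume comparison induced by the Ricci lower bound, which gives $\int^{\infty}G(t)^{1-2m}\,dt<\infty$, so that the right-hand side remains finite and the bound stays meaningful as $r\to\infty$.

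The main obstacle is not the maximum-principle bookkeeping but securing the constants $c_1,c_2$ \emph{independently of} $r$. For the annular estimate this rests on the monotonicity of the Dirichlet Green function in the domain together with the compactness of the inner sphere; for the sharp estimate near the pole it rests on the $r$-independence of the local parametrix coefficient $C_m$. A secondary technical point is the correct invocation of the variable-curvature Laplacian comparison: one must verify that the relevant Jacobi field is exactly the $G$ of $(\ref{G})$ and, via Lemma \ref{est}, that $G>0$ on all of $(0,\infty)$, so that $\phi$ is well defined and the comparison is never obstructed by conjugate points.
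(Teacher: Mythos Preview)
The paper does not prove this lemma itself; it is cited from Atsuji \cite{at2018a}, with the note that the original argument proceeds by stochastic calculus (Brownian-motion exit-time and occupation-time estimates). Your route is genuinely different and more elementary: you build the radial model $\phi$, verify its subharmonicity via the variable-curvature Laplacian comparison $\Delta\rho\le(2m-1)G'(\rho)/G(\rho)$, and run the minimum principle on annuli. The annular argument for the first inequality is sound, and the device $c_1=\min_{\partial B(2\eta)}g_{3\eta}$ together with the domain monotonicity $g_r\ge g_{3\eta}$ is exactly the right way to make the constant independent of $r$. The stochastic approach in \cite{at2018a} buys robustness (it does not need $\rho$ to be smooth, hence does not rely on the simply-connected Cartan--Hadamard reduction adopted in this paper), while your PDE argument is shorter and self-contained once that reduction is in place. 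One minor point: the reference \cite{P1} concerns ODE comparison; for the Riemannian Laplacian comparison you want something like \cite{Greene}.

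Your account of where non-parabolicity enters in the second clause is imprecise. The statement is for fixed $r$, so ``the bound stays meaningful as $r\to\infty$'' is not the issue, and your singularity-matching step as written does not invoke non-parabolicity at all. A cleaner way to organise it: once the first inequality is proved, the second follows on $B(r)\setminus\overline{B(2\eta)}$ simply from $\int_\eta^r G^{1-2m}\,dt\le\int_\eta^\infty G^{1-2m}\,dt$, and it is the finiteness of this last integral that is needed; inside $\overline{B(2\eta)}$ one uses $g_r\ge g_{3\eta}$ again and compares the two \emph{fixed} functions $g_{3\eta}(o,\cdot)$ and $\int_{\rho(\cdot)}^\infty G^{1-2m}\,dt$, which have matching singularities at $o$ and are positive on the compact shell, so their ratio is bounded below---again requiring that integral to converge. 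For $m\ge2$ the convergence is automatic from $G(t)\ge t$ (Lemma~\ref{est}); for $m=1$ it is genuinely tied to non-parabolicity.
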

    
    For an upper bound of $g_r(o,x),$ we have the following well-known fact. 
    
 \begin{lemma}[\cite{Deb}]\label{sing}  We have 
$$g_r(o,x)\leq\left\{
                \begin{array}{ll}
                  \frac{1}{\pi}\log\frac{r}{\rho(x)}, \  \   & m=1; \\
                  \frac{1}{(m-1)\omega_{2m-1}}\big{(}\rho(x)^{2-2m}-r^{2-2m}\big{)},  \ \     & m\geq2  \\
                \end{array}
              \right. $$
              and
$$d\pi_r(x)\leq\frac{r^{1-2m}}{\omega_{2m-1}}d\sigma_r(x),  \  \ \ \ \ \ \ \ \ \ \ \ \  \ \   \ \ \ \ \  \ \ 
\  \ \ \ \ \ \ \ \ \ \ \ \  $$
 where  $\omega_{2m-1}$ is the  standard Euclidean    area  of  the unit sphere in $\mathbb R^{2m}.$ 
\end{lemma}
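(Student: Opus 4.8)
The plan is to compare the Green function of $\Delta/2$ on $M$ with the \emph{explicit} radial Green function of a Euclidean ball of the same real dimension $n=2m$: the right-hand sides of the asserted bound are precisely the radial profiles solving $\frac12\Delta(\cdot)=-\delta_o$ on a ball $B(r)\subset\mathbb R^{2m}$ with a pole at the center. Since we have already reduced to the simply-connected case, $M$ is a Cartan--Hadamard manifold; hence $\exp_o$ is a diffeomorphism, $\rho$ is smooth on $M\setminus\{o\}$ with $|\nabla\rho|\equiv1$, and there is no cut locus to contend with. The crucial analytic input is the Laplacian comparison theorem: because the sectional curvature is $\le 0$, comparison with the flat model yields
$$\Delta\rho\ge\frac{2m-1}{\rho}\qquad\text{on }M\setminus\{o\}.$$

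Let $\Phi$ denote the radial profile on the right-hand side of the claim (that is $\tfrac1\pi\log(r/\rho)$ for $m=1$ and $\tfrac{1}{(m-1)\omega_{2m-1}}(\rho^{2-2m}-r^{2-2m})$ for $m\ge2$), and set $u(x)=\Phi(\rho(x))$. First I would record that $\Phi$ solves $\Phi''+\tfrac{2m-1}{\rho}\Phi'=0$ with $\Phi(r)=0$, and that $u$ carries exactly the same pole as $g_r$: a flux computation over $\partial B(\varepsilon)$, using that the metric is Euclidean to leading order at $o$, gives $\tfrac12\int_{B(\varepsilon)}\Delta u\,dv\to-1$, so $\tfrac12\Delta u=-\delta_o+(\text{density})$ distributionally. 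Away from $o$ I would compute $\Delta u=\Phi'(\rho)\big(\Delta\rho-\tfrac{2m-1}{\rho}\big)$; since $\Phi'<0$ and $\Delta\rho-\tfrac{2m-1}{\rho}\ge0$, this is $\le0$, so $u$ is superharmonic on $B(r)\setminus\{o\}$. Writing $\tfrac12\Delta u=-\delta_o-\mu$ with $\mu\ge0$, and recalling $\tfrac12\Delta g_r=-\delta_o$, the pole cancels in $w:=u-g_r$, leaving $\tfrac12\Delta w=-\mu\le0$. Because the residual density behaves like $O(\rho^{2-2m})$ near $o$ (as $\Delta\rho-\tfrac{2m-1}{\rho}=O(\rho)$ there), $\mu$ has no atom at $o$ and is locally finite, so $w$ is a genuine superharmonic function on all of $B(r)$, continuous across $o$ (the singularities cancel). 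Since $w=0$ on $\partial B(r)$, the minimum principle gives $w\ge0$, i.e. $g_r(o,x)\le\Phi(\rho(x))$, which is the first inequality.

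For the harmonic-measure bound I would pass to inward normal derivatives on $\partial B(r)$. From $d\pi_r=\tfrac12\tfrac{\partial g_r}{\partial\vec\nu}d\sigma_r$ it suffices to control $\partial g_r/\partial\vec\nu$ there. Since $w=u-g_r\ge0$ in $B(r)$ and $w=0$ on $\partial B(r)$, the function $w$ attains its minimum along the boundary, so its inward normal derivative is $\ge0$; thus $\tfrac{\partial g_r}{\partial\vec\nu}\le\tfrac{\partial u}{\partial\vec\nu}=-\Phi'(r)$ on $\partial B(r)$ (using $\vec\nu=-\nabla\rho$ and $|\nabla\rho|=1$). Evaluating $-\Phi'(r)=\tfrac{2}{\omega_{2m-1}}r^{1-2m}$ in both ranges of $m$ and multiplying by $\tfrac12$ yields $d\pi_r\le\tfrac{r^{1-2m}}{\omega_{2m-1}}d\sigma_r$, as claimed.

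I expect the main obstacle to be the rigorous treatment of the pole at $o$: one must confirm that the flat profile $\Phi(\rho)$ genuinely carries the full singularity $-\delta_o$ of $\tfrac12\Delta$ (not merely the correct leading power), and that the excess-superharmonicity measure $\mu$ is integrable with no mass at $o$, so that $w=u-g_r$ is a bona fide superharmonic function to which the minimum principle applies. This rests on the normal-coordinate expansion $\Delta\rho=\tfrac{2m-1}{\rho}+O(\rho)$ and on the leading Euclidean form of the area and volume elements near $o$; once these are in place, the maximum-principle comparison and the boundary-derivative estimate are routine.
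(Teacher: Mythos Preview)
The paper does not actually prove this lemma; it is quoted verbatim from the reference \cite{Deb} (Debiard--Gaveau--Mazet) and used as a black box. So there is no ``paper's own proof'' to compare against.

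That said, your argument is correct and is essentially the classical one underlying the cited reference. The Laplacian comparison $\Delta\rho\ge(2m-1)/\rho$ on a Cartan--Hadamard manifold, together with $\Phi'<0$ and $\Phi''+\tfrac{2m-1}{\rho}\Phi'=0$, makes $u=\Phi\circ\rho$ superharmonic on $B(r)\setminus\{o\}$; the matching of poles at $o$ via the flux/normal-coordinate argument and the minimum principle then yield $g_r\le u$. The boundary-derivative step for $d\pi_r$ is likewise standard and your computation $-\Phi'(r)=\tfrac{2}{\omega_{2m-1}}r^{1-2m}$ checks out in both dimensional ranges. Your identified ``main obstacle'' (that the flat profile carries the full $-\delta_o$ singularity with no residual atom) is indeed the only point requiring care, and your sketch of how to handle it via the expansion $\Delta\rho=\tfrac{2m-1}{\rho}+O(\rho)$ is adequate. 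Nothing is missing.
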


We also need the  Borel's growth lemma (see \cite{No, ru}) as follows. 

 \begin{lemma}[Borel's Growth  Lemma]\label{} Let $u\geq0$ be a non-decreasing  function  on $(r_0, \infty)$ with $r_0\geq0.$ Then for any $\delta>0,$ there exists a subset $E_\delta\subset(r_0,\infty)$
 of finite Lebesgue measure such that  
 $$u'(r)\leq u(r)^{1+\delta}$$
 holds for all $r>r_0$ outside $E_{\delta}.$  
 \end{lemma}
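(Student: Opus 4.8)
The plan is to show directly that the set where the claimed inequality fails has finite Lebesgue measure, by comparing it against a convergent improper integral. Put
$$E_\delta=\left\{r\in(r_0,\infty): u \text{ is differentiable at } r \text{ and } u'(r)>u(r)^{1+\delta}\right\}.$$
Since $u$ is non-decreasing, Lebesgue's differentiation theorem for monotone functions guarantees that $u'(r)$ exists for almost every $r$, so $E_\delta$ is a well-defined measurable set and altering $u'$ on a null set does not affect the conclusion. First I would dispose of the region where $u$ vanishes: because $u\geq0$ is non-decreasing, the set $\{u=0\}$ is an initial subinterval of $(r_0,\infty)$ on whose interior $u\equiv0$, whence $u'=0$ there almost everywhere and this part contributes nothing to $E_\delta$. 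If $u\equiv0$ throughout then $E_\delta=\emptyset$ and there is nothing to prove; otherwise I fix $r_1$ with $u(r_1)>0$ and work on $(r_1,\infty)$, where $u>0$.

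On $E_\delta\cap(r_1,\infty)$ one has $u'(r)>u(r)^{1+\delta}>0$, hence $u'(r)u(r)^{-(1+\delta)}>1$ there. Denoting by $|E_\delta|$ the Lebesgue measure of $E_\delta$ and using that the integrand is nonnegative, this gives
$$|E_\delta|=\big|E_\delta\cap(r_1,\infty)\big|\leq\int_{E_\delta\cap(r_1,\infty)}\frac{u'(r)}{u(r)^{1+\delta}}\,dr\leq\int_{r_1}^{\infty}\frac{u'(r)}{u(r)^{1+\delta}}\,dr.$$
The crux is that this last integral converges. Formally it equals $\int_{r_1}^{\infty}\frac{d}{dr}\big(-\delta^{-1}u(r)^{-\delta}\big)\,dr$, which by the substitution $v=u(r)$ is controlled by $\int_{u(r_1)}^{\infty}v^{-(1+\delta)}\,dv=\delta^{-1}u(r_1)^{-\delta}<\infty$. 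Thus $|E_\delta|\leq\delta^{-1}u(r_1)^{-\delta}<\infty$, which is exactly the assertion.

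The one point that needs care, and which I regard as the main obstacle, is that $u$ is assumed only non-decreasing rather than $\mathscr C^1$, so the substitution $v=u(r)$ must be justified for a function that is merely almost everywhere differentiable and may carry a singular part. The clean way to proceed is to interpret $du$ as the Lebesgue--Stieltjes measure of $u$ and to observe that its absolutely continuous part equals $u'(r)\,dr$ while the remaining singular part is nonnegative; hence $u'(r)\,dr\leq du$ as measures, and pushing forward by the monotone map $u$ shows that $\int_{r_1}^{\infty}u(r)^{-(1+\delta)}u'(r)\,dr$ is bounded by $\int_{u(r_1)}^{u(\infty)}v^{-(1+\delta)}\,dv$, any jumps of $u$ merely omitting subintervals of the $v$-range and, since the integrand is positive, only decreasing the right-hand side. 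Once this substitution is in place, the convergence of $\int_{u(r_1)}^{\infty}v^{-(1+\delta)}\,dv$ for $\delta>0$ is immediate. In the intended application $u$ is a smooth (or at least absolutely continuous) Nevanlinna characteristic, in which case the substitution is the ordinary one and this subtlety evaporates.
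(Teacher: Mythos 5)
Your proof is correct and follows essentially the same route as the paper's: bound the measure of the exceptional set by $\int_{r_1}^{\infty}u'(r)u(r)^{-(1+\delta)}\,dr$, using that the integrand exceeds $1$ on $E_\delta$ there, and show this integral is at most $\delta^{-1}u(r_1)^{-\delta}$; your Stieltjes-measure justification of that last step for a merely monotone $u$ is in fact more careful than the paper's, which writes the integral as an exact difference of antiderivative values (only the inequality $\leq$ is valid in general, and only that is needed). One small slip: the equality $|E_\delta|=|E_\delta\cap(r_1,\infty)|$ need not hold, since $E_\delta$ can meet the region where $u>0$ but $r<r_1$ in positive measure (e.g.\ $u(r)=e^{-1/(r-a)}$ just to the right of the endpoint $a$ of the zero set); this is harmless, because that region is contained in $(r_0,r_1]$ and so contributes at most $r_1-r_0$, which is exactly the extra term $r_1-r_0$ that the paper's estimate carries.
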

 \begin{proof} The conclusion is clearly true  for $u\equiv0.$ Next, we assume that $u\not\equiv0.$
 Since $u\geq0$ is a non-decreasing  function,  there exists a number $r_1>r_0$ such that $u(r_1)>0.$  The non-decreasing property of $u$ implies that  the  limit
$$\eta:=\lim_{r\to\infty}u(r)$$
exists or $\eta=\infty.$  If $\eta=\infty,$ then $\eta^{-1}=0.$ 
 Set  
 $$E_\delta=\left\{r\in(r_0,\infty):  u'(r)>u(r)^{1+\delta}\right\}.$$
Since $u$ is a non-decreasing function on $(r_0, \infty),$  we deduce that  $u'(r)$ exists for  almost all  $r\in(r_0, \infty).$   It is therefore  
   \begin{eqnarray*}
\int_{E_\delta}dr 
 &\leq& \int_{r_0}^{r_1}dr+\int_{r_1}^\infty\frac{u'(r)}{u(r)^{1+\delta}}dr \\
 &=&\frac{1}{\delta u(r_1)^\delta}-\frac{1}{\delta \eta^\delta}+r_1-r_0\\
 &<&\infty.
    \end{eqnarray*}
 This completes the proof. 
 \end{proof}
 
  Set 
  \begin{equation}\label{kkk}
  K(r,\delta)=\frac{r^{1-2m}\left(\displaystyle\int_{\frac{1}{3}}^rG(t)^{1-2m}dt\right)^{(1+\delta)^2}}{G(r)^{(1-2m)(1+\delta)}},
    \end{equation}
where $G(t)$ is defined by (\ref{G}).

With the previous preparations,  we  establish a calculus lemma: 
\begin{theorem}[Calculus Lemma]\label{cal} Let $k\geq0$ be a locally integrable function on $M.$ Assume that $k$ is locally bounded at $o.$ Then 
 for any   $\delta>0,$  there exists   a subset $E_{\delta}\subset(1,\infty)$ of finite Lebesgue measure such that
$$\int_{\partial B(r)}kd\pi_r\leq C K(\delta, r)
      \left(\int_{B(r)}g_r(o,x)kdv\right)^{(1+\delta)^2}$$
holds for all  $r>1$ outside $E_{\delta},$ where    $K(r,\delta)$ is defined by $(\ref{kkk})$ and $C>0$ is a sufficiently large  constant independent of  $\delta, r.$ 
 \end{theorem}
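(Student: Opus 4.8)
The plan is to pass from the harmonic boundary integral to an ordinary surface integral, and then to convert that surface integral into the Green-weighted volume integral by a coarea computation followed by two applications of Borel's Growth Lemma. Since the paper has already reduced matters to the simply-connected (Cartan--Hadamard) case, the distance function $\rho$ is smooth off $o$ with $|\nabla\rho|=1$, so the coarea formula applies cleanly. First I would invoke the second estimate in Lemma \ref{sing} to write $\int_{\partial B(r)}k\,d\pi_r\le \omega_{2m-1}^{-1}r^{1-2m}S(r)$, where $S(r)=\int_{\partial B(r)}k\,d\sigma_r$; this already produces the factor $r^{1-2m}$ appearing in $K(r,\delta)$. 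Setting $V(r)=\int_{B(r)}k\,dv$ and $\Lambda(r)=\int_{B(r)}g_r(o,x)k\,dv$, the coarea formula gives $V(r)=\int_0^r S(t)\,dt$, so $V'(r)=S(r)$ for almost every $r$ and $V$ is non-decreasing. The whole problem is thus reduced to bounding $S(r)=V'(r)$ from above by a power of $\Lambda(r)$ times the remaining factors of $K(r,\delta)$.

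The key step is an intermediate inequality relating $V$ and $\Lambda$ through $G$. Fixing $\eta=1/6$ in Lemma \ref{zz}, for $x\in B(r)\setminus\overline{B(1/3)}$ one has $g_r(o,x)\ge c_1\Psi(r)^{-1}\int_{\rho(x)}^rG(t)^{1-2m}\,dt$ with $\Psi(r)=\int_{1/6}^rG(t)^{1-2m}\,dt$. Discarding the non-negative contribution of $B(1/3)$ and applying the coarea formula yields
$$\Lambda(r)\ \ge\ \frac{c_1}{\Psi(r)}\int_{1/3}^r\left(\int_s^rG(t)^{1-2m}\,dt\right)S(s)\,ds.$$
Integrating by parts in $s$, with $Q(s)=\int_{1/3}^sS(u)\,du$, the boundary terms vanish (the inner integral is zero at $s=r$ and $Q(1/3)=0$), converting the right-hand side into $c_1\Psi(r)^{-1}W(r)$, where $W(r)=\int_{1/3}^rG(s)^{1-2m}Q(s)\,ds$. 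Hence $W(r)\le c_1^{-1}\Psi(r)\Lambda(r)$. Now I would apply Borel's Growth Lemma to the non-decreasing function $W$, giving $W'(r)=G(r)^{1-2m}Q(r)\le W(r)^{1+\delta}$ off a set of finite measure; solving for $Q(r)$ produces $Q(r)\le c_1^{-(1+\delta)}G(r)^{2m-1}\Psi(r)^{1+\delta}\Lambda(r)^{1+\delta}$. A second application of Borel's Growth Lemma, this time to $Q$, gives $S(r)=Q'(r)\le Q(r)^{1+\delta}$ off a further set of finite measure, and substituting the previous bound yields
$$S(r)\ \le\ c_1^{-(1+\delta)^2}\,G(r)^{(2m-1)(1+\delta)}\,\Psi(r)^{(1+\delta)^2}\,\Lambda(r)^{(1+\delta)^2}.$$

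Finally I would assemble the pieces. Multiplying by $\omega_{2m-1}^{-1}r^{1-2m}$ and noting that $G(r)^{(2m-1)(1+\delta)}=G(r)^{-(1-2m)(1+\delta)}$ reproduces exactly the factor $r^{1-2m}/G(r)^{(1-2m)(1+\delta)}$ of $K(r,\delta)$; it remains only to replace $\Psi(r)=\int_{1/6}^rG^{1-2m}$ by $\int_{1/3}^rG^{1-2m}$. By Lemma \ref{est}, $G(t)\ge t$, so $G$ is continuous and bounded below by $1/6$ on $[1/6,1/3]$ and $\int_{1/6}^{1/3}G^{1-2m}$ is a fixed finite constant, while for $r>1$ the integral $\int_{1/3}^rG^{1-2m}$ is bounded below by the positive constant $\int_{1/3}^1G^{1-2m}$; hence $\Psi(r)\le C'\int_{1/3}^rG^{1-2m}$ uniformly in $r$, which upgrades $\Psi(r)^{(1+\delta)^2}$ to the $\big(\int_{1/3}^rG^{1-2m}\big)^{(1+\delta)^2}$ appearing in $K(r,\delta)$. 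Taking $E_\delta$ to be the union of the two finite-measure exceptional sets and choosing $C$ large (the $\delta$-dependent factors such as $c_1^{-(1+\delta)^2}$ are bounded for $\delta$ in any bounded range) gives the claim. The main obstacle is the middle paragraph: one must route the estimate through the right intermediate function $W$ so that the integration by parts is clean and the two Borel applications produce precisely the exponent $(1+\delta)^2$ together with the exact powers of $G(r)$ and $\int_{1/3}^rG^{1-2m}$ demanded by $K(r,\delta)$. Using the general (possibly parabolic) form of Lemma \ref{zz}, rather than its non-parabolic special case, is what keeps the argument uniform across all complete non-positively curved $M$, and is the point most likely to require care.
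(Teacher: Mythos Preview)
Your argument is essentially the paper's: bound $g_r$ below via Lemma~\ref{zz}, pass through a coarea/double-integral representation, and apply Borel's lemma twice; the only cosmetic difference is that you work with $S(t)=\int_{\partial B(t)}k\,d\sigma_t$ and convert to $d\pi_r$ at the very end, whereas the paper converts $d\sigma_t\to d\pi_t$ at the outset (so its intermediate integrand carries an extra $t^{2m-1}$) and then differentiates rather than integrates by parts. The one place the paper's bookkeeping is sharper is the $\delta$-independence of $C$: by arranging that the constant $c$ from Lemma~\ref{zz} sits in the \emph{exact} differential identity $\tfrac{d}{dr}\big(G(r)^{2m-1}A'(r)\big)=c\,\omega_{2m-1}\,r^{2m-1}\!\int_{\partial B(r)}k\,d\pi_r$ rather than in the inequality $A(r)\le\Gamma(r)\!\int_{1/3}^rG^{1-2m}$ that gets raised to the $(1+\delta)^2$ power, the paper obtains $C=1/(c\,\omega_{2m-1})$ truly independent of $\delta$, whereas your route produces $c_1^{-(1+\delta)^2}(C')^{(1+\delta)^2}$, which is only bounded on bounded $\delta$-ranges.
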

\begin{proof} 
 For $r>1,$  it yields from Lemma \ref{sing} that 
      \begin{eqnarray*}
\Lambda(r) &:=& \int_{B(r)}g_r(o,x)kdv  \\
&\geq& \int_1^rdt\int_{\partial B(t)}g_r(o,x)kd\sigma_t \\
      &\geq& \omega_{2m-1} \int_1^rt^{2m-1}dt\int_{\partial B(t)}g_r(o,x)kd\pi_t.  
      \end{eqnarray*}
       According to Lemma \ref{zz},  there exists  a constant $c>0$ such that
  $$g_r(o,x)\geq c\frac{\displaystyle\int_{\rho(x)}^rG(t)^{1-2m}dt}{\displaystyle\int_{1/3}^rG(t)^{1-2m}dt}$$
  holds all $x\in B(r)\setminus \overline{B(2/3)}$ with $r>1.$
Whence, for $r>t>1$  
$$g_r(o,x)\big|_{\partial B(t)}\geq c\frac{\displaystyle\int_{t}^rG(s)^{1-2m}ds}{\displaystyle\int_{\frac{1}{3}}^rG(s)^{1-2m}ds}.$$  
       It is therefore 
                 \begin{eqnarray*}
   & &   \int_{\partial B(t)}g_r(o,x)kd\pi_t  \\
      &\geq& c\left(\displaystyle\int_{\frac{1}{3}}^rG(s)^{1-2m}ds\right)^{-1} \int_{\partial B(t)}kd\pi_t\displaystyle\int_{t}^rG(s)^{1-2m}ds. 
                       \end{eqnarray*}
Combining the above to get  
      \begin{eqnarray*}
&& \Gamma(r) \\ 
      &\geq& c\omega_{2m-1}\left(\displaystyle\int_{\frac{1}{3}}^rG(s)^{1-2m}ds\right)^{-1} \int_1^rt^{2m-1}dt\int_{\partial B(t)}kd\pi_t\displaystyle\int_{t}^rG(s)^{1-2m}ds \\
            &:=& \Lambda(r), 
      \end{eqnarray*}
    which  leads to  
                           \begin{eqnarray*}
\frac{\displaystyle\frac{d}{dr}\left(\Lambda(r)\displaystyle\int_{\frac{1}{3}}^rG(t)^{1-2m}dt\right)}{G(r)^{1-2m}} &=&  c\omega_{2m-1}\int_1^rt^{2m-1}dt\int_{\partial B(t)}kd\pi_t.
                       \end{eqnarray*}
In further, we obtain 
       $$ \frac{d}{dr}\frac{\displaystyle\frac{d}{dr}\left(\Lambda(r)\displaystyle\int_{\frac{1}{3}}^rG(t)^{1-2m}dt\right)}{G(r)^{1-2m}}=c\omega_{2m-1}r^{2m-1}\int_{\partial B(r)}kd\pi_r.$$                
       Using  Borel's growth lemma twice,  then for any $\delta>0,$  there exists a subset $E_\delta\subset(1,\infty)$ of finite Lebesgue measure such that   
            \begin{eqnarray*}
     \int_{\partial B(r)}kd\pi_r  
    &\leq& C\frac{\displaystyle r^{1-2m}\left(\displaystyle\int_{\frac{1}{3}}^rG(t)^{1-2m}dt\right)^{(1+\delta)^2}}{G(r)^{(1-2m)(1+\delta)}}\Lambda(r)^{(1+\delta)^2} \\
     &\leq& CK(r,\delta)\Gamma(r)^{(1+\delta)^2}
               \end{eqnarray*}
  holds for  all $r>1$ outside $E_\delta,$ where $C=1/c\omega_{2m-1}>0$ is  clearly a constant independent of  $\delta, r.$ 
  This completes the proof. 
\end{proof}

By  estimating $\log^+K(r,\delta),$ we further obtain: 

\begin{cor}\label{cal1} Let $k\geq0$ be a locally integrable function on $M.$ Assume that $k$ is locally bounded at $o.$ Then 
 for any   $\delta>0,$  there exists   a subset $E_{\delta}\subset(1,\infty)$ of finite Lebesgue measure such that
 \begin{eqnarray*}
&& \log^+\int_{\partial B(r)}kd\pi_r  \\
&\leq& 
      (1+\delta)^2\log^+\int_{B(r)}g_r(o,x)kdv+O\left(\sqrt{-\kappa(r)}r+\delta\log r\right)
      \end{eqnarray*} 
holds for all  $r>1$ outside $E_{\delta},$  where $\kappa(r)$ is defined by $(\ref{ricci}).$     
 \end{cor}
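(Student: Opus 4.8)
The plan is to take $\log^+$ of the inequality furnished by the Calculus Lemma (Theorem \ref{cal}) and then control the resulting factor $\log^+ K(r,\delta)$ by hand. Writing $\Lambda(r)=\int_{B(r)}g_r(o,x)k\,dv$, Theorem \ref{cal} gives $\int_{\partial B(r)}k\,d\pi_r\leq C K(r,\delta)\Lambda(r)^{(1+\delta)^2}$ for all $r>1$ outside a set of finite measure. Since $\log^+(ab)\leq\log^+a+\log^+b+\log 2$ and $\log^+(a^s)=s\,\log^+a$ for $s\geq0$, applying $\log^+$ yields
$$\log^+\int_{\partial B(r)}k\,d\pi_r\leq (1+\delta)^2\log^+\Lambda(r)+\log^+K(r,\delta)+O(1).$$
Thus the whole statement reduces to the single estimate $\log^+K(r,\delta)=O(\sqrt{-\kappa(r)}\,r+\delta\log r)$; and since the right-hand side is nonnegative, it suffices to bound $\log K(r,\delta)$ itself.

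Next I would expand, with $n:=2m-1>0$,
$$\log K(r,\delta)=-n\log r+n(1+\delta)\log G(r)+(1+\delta)^2\log\int_{1/3}^rG(t)^{-n}\,dt,$$
and treat the three pieces separately using the two-sided bound of Lemma \ref{est}, namely $t\leq G(t)\leq \chi(\sqrt{-\kappa(t)},t)=\sinh(\sqrt{-\kappa(t)}\,t)/\sqrt{-\kappa(t)}$. The integral term is the easiest: from $G(t)\geq t$ we get $\int_{1/3}^rG(t)^{-n}\,dt\leq\int_{1/3}^rt^{-n}\,dt$, which is bounded for $m\geq2$ and equals $\log(3r)$ for $m=1$; in either case its logarithm is $O(\log\log r)=o(\log r)$ and is absorbed into the error.

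The heart of the matter is the middle term, and the key elementary inequality is $\sinh x\leq x\,e^{x}$ for $x\geq0$ (equivalently $1-e^{-2x}\leq 2x$, immediate from differentiating $2x-1+e^{-2x}$). Writing $x=\sqrt{-\kappa(r)}\,r$, Lemma \ref{est} gives $G(r)/r\leq \sinh x/x\leq e^{x}$, hence $\log\bigl(G(r)/r\bigr)\leq\sqrt{-\kappa(r)}\,r$ and $\log G(r)\leq\log r+\sqrt{-\kappa(r)}\,r$. I would then recombine: the $-n\log r$ piece cancels the $n\log r$ coming from $n\log G(r)$, leaving $n\log\bigl(G(r)/r\bigr)\leq n\sqrt{-\kappa(r)}\,r=O(\sqrt{-\kappa(r)}\,r)$, while the remaining $n\delta\log G(r)\leq n\delta\log r+n\delta\sqrt{-\kappa(r)}\,r=O(\delta\log r+\sqrt{-\kappa(r)}\,r)$. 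Summing the three contributions produces exactly $\log K(r,\delta)=O(\sqrt{-\kappa(r)}\,r+\delta\log r)$, which closes the reduction.

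The main obstacle I anticipate is packaging the middle term cleanly across the degenerate regime $\kappa(r)\to0$: there the bound $\sinh x/x$ collapses to $G(r)=r$, the term $\sqrt{-\kappa(r)}\,r$ vanishes, and the entire error must be supplied by the surviving $n\delta\log r$ arising from the mismatch between $-n\log r$ and $n(1+\delta)\log G(r)$. Relying on the uniform inequality $\log\bigl(G(r)/r\bigr)\leq\sqrt{-\kappa(r)}\,r$ — valid in the limit $x\to0$, where both sides tend to $0$ — handles the curved and flat regimes simultaneously and avoids any case split; this is the point that needs the most care.
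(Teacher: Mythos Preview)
Your argument is correct and follows the same route as the paper: apply Theorem~\ref{cal}, take $\log^+$, and reduce to bounding $\log^+K(r,\delta)$ via the two-sided estimate of Lemma~\ref{est}. The one difference is that the paper splits into the cases $\kappa\equiv0$ and $\kappa\not\equiv0$, whereas your uniform inequality $\sinh x\le xe^{x}$ (equivalently $\log(G(r)/r)\le\sqrt{-\kappa(r)}\,r$) handles both regimes at once, which is a slightly cleaner packaging of the same estimate.
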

\begin{proof}  
 By  Lemma \ref{est}, we have  $G(t)\geq t$ for $t>0.$ Thus, one has
   \begin{eqnarray*}
\int_{\frac{1}{3}}^rG(t)^{1-2m}dt &\leq& \int_{1}^r t^{1-2m}dt+O(1) \\
&\leq& \log r+O(1)
\end{eqnarray*}
for $r>1.$
It yields   that 
\begin{equation}\label{x1}
\log^+\left(\int_{\frac{1}{3}}^rG(t)^{1-2m}dt\right)^{(1+\delta)^2}\leq (1+\delta)^2\log^+\log r+O(1).
\end{equation}
If $\kappa(r)\equiv0,$ then $G(r)\leq r$ due to Lemma \ref{est}. Thus, we obtain  
$$r^{1-2m}G(r)^{(2m-1)(1+\delta)}\leq r^{(2m-1)\delta},$$
which leads to  
\begin{equation}\label{x2}
\log^+\left(r^{1-2m}G(r)^{(2m-1)(1+\delta)}\right)\leq (2m-1)\delta \log r
\end{equation}
for $r>1.$ Combining (\ref{x1}) with (\ref{x2}), we conclude that 
$$\log^+K(r,\delta)\leq O(\delta\log r).$$
Using Theorem \ref{cal}, we have  the conclusion holds. 
If $\kappa(r)\not\equiv0,$ then  Lemma \ref{est}  yields that 
\begin{eqnarray}\label{x3}
 \log^+\left(r^{1-2m}G(r)^{(2m-1)(1+\delta)}\right) 
&\leq&\log^+G(r)^{(2m-1)(1+\delta)}   \\
&\leq& O\left(\sqrt{-\kappa(r)}r+1\right). \nonumber
\end{eqnarray}
Combining (\ref{x1}) with (\ref{x3}), we conclude that 
$$\log^+K(r,\delta)\leq O\left(\sqrt{-\kappa(r)}r+1\right).$$
Hence,  the conclusion  holds also due to   Theorem \ref{cal}.  
\end{proof}

In the following, we shall establish a logarithmic derivative lemma.     Let $\psi$ be a meromorphic function on $M.$ In terms of   holomorphic local coordinates $z_1,\cdots,z_m,$
the norm of the gradient $\nabla\psi$  is defined  by 
$$\|\nabla\psi\|^2=2\sum_{i,j=1}^m g^{i\overline j}\frac{\partial\psi}{\partial z_i}\overline{\frac{\partial \psi}{\partial  z_j}},$$
 where $(g^{i \bar j})$ is the inverse of $(g_{i \bar j}).$
Define   
$$T(r,\psi):=m(r,\psi)+N(r,\psi),$$ where 
\begin{eqnarray*}
m(r,\psi)&=&\int_{\partial B(r)}\log^+|\psi|d\pi_r, \\
N(r,\psi)&=& \frac{\pi^m}{(m-1)!}\int_{\psi^*\infty\cap B(r)}g_r(o,x)\alpha^{m-1}.
 \end{eqnarray*}
\ \ \ \    On $\mathbb P^1(\mathbb C),$ we take a singular metric
$$\Psi=\frac{1}{|\zeta|^2(1+\log^2|\zeta|)}\frac{\sqrt{-1}}{4\pi^2}d\zeta\wedge d\bar \zeta$$  
so that 
   $$ \int_{\mathbb P^1(\mathbb C)}\Psi=1.$$

\begin{lemma}\label{oo12} Let $\psi\not\equiv0$ be a  meromorphic function on  $M.$  Then 
$$\frac{1}{4\pi}\int_{B(r)}g_r(o,x)\frac{\|\nabla\psi\|^2}{|\psi|^2(1+\log^2|\psi|)}dv\leq T(r,\psi)+O(1).$$
\end{lemma}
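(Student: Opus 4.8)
The plan is to pull the singular metric $\Psi$ back by $\psi$ and recognize the left-hand integrand as (a multiple of) the pulled-back mass density. Concretely, I would first compute $\psi^*\Psi$ in local coordinates. Since
$$\Psi=\frac{1}{|\zeta|^2(1+\log^2|\zeta|)}\frac{\sqrt{-1}}{4\pi^2}d\zeta\wedge d\bar\zeta,$$
writing $\zeta=\psi$ and using $d\psi\wedge d\bar\psi=\sum_{i,j}\frac{\partial\psi}{\partial z_i}\overline{\frac{\partial\psi}{\partial z_j}}dz_i\wedge d\bar z_j$, the wedge with $\alpha^{m-1}$ extracts exactly the gradient norm: one checks that
$$\frac{\pi^m}{(m-1)!}\,\psi^*\Psi\wedge\alpha^{m-1}=\frac{1}{4\pi}\,\frac{\|\nabla\psi\|^2}{|\psi|^2(1+\log^2|\psi|)}\,dv,$$
so the left side of the claim is $\frac{\pi^m}{(m-1)!}\int_{B(r)}g_r(o,x)\,\psi^*\Psi\wedge\alpha^{m-1}$. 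This is the key algebraic identification, and it reduces the whole statement to bounding a characteristic-type integral of the $(1,1)$-form $\psi^*\Psi$.

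Next I would connect $\Psi$ to the Fubini–Study form on $\PP^1(\CC)$ via a potential. The point of the factor $1/(1+\log^2|\zeta|)$ is that $\Psi$ has finite total mass $1$ and is cohomologous (as a current) to the Fubini–Study class pulled back by $\psi$, but with an extra $dd^c$ of an explicit function of $|\psi|$. Writing $\Psi=dd^c\Phi(|\zeta|)$ locally for a suitable potential $\Phi$ built from $\arctan(\log|\zeta|)$ (so that $\Phi$ is bounded), I would have $\psi^*\Psi=dd^c(\Phi\circ\psi)$ away from zeros and poles of $\psi$. Then the Green-function integral $\frac{\pi^m}{(m-1)!}\int_{B(r)}g_r\,dd^c(\Phi\circ\psi)\wedge\alpha^{m-1}$ can be rewritten, via the identity $dd^c(\,\cdot\,)\wedge\alpha^{m-1}=\frac{1}{4m}\Delta(\,\cdot\,)\,\alpha^m\cdot\frac{m!}{\pi^m}$ (the standard relation between $dd^c u\wedge\alpha^{m-1}$ and the Laplacian), as $-\tfrac14\int_{B(r)}g_r\,\Delta(\Phi\circ\psi)\,dv$ up to the defined constants.

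Then I would apply the Jensen–Dynkin formula (Lemma \ref{dynkin}) to the bounded function $\phi=\Phi\circ\psi$, which has at worst a polar singularity set coming from the zeros and poles of $\psi$. This converts the $\frac12\int_{B(r)}g_r\Delta\phi\,dv$ term into the boundary integral $\int_{\partial B(r)}\phi\,d\pi_r-\phi(o)$. Because $\Phi$ is constructed to be uniformly bounded (the arctangent is bounded), the boundary term is $O(1)$; the residual contribution from the singularities, handled through Poincaré–Lelong, produces precisely a counting term dominated by $N(r,\psi)$ together with a proximity contribution bounded by $m(r,\psi)$. Summing these gives $T(r,\psi)+O(1)$, which is the asserted bound.

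The main obstacle I expect is the careful bookkeeping of signs and singular contributions near the zeros and poles of $\psi$: the function $\Phi\circ\psi$ is not $\mathscr C^2$ there, so I must justify applying Jensen–Dynkin on the complement of a polar set and show that the distributional $dd^c$ of $\Phi\circ\psi$ splits cleanly into the smooth piece $\psi^*\Psi$ plus positive Dirac-type masses supported on $\psi^{-1}(0)$ and $\psi^{-1}(\infty)$ (via Poincaré–Lelong). Controlling these masses so that they assemble into exactly the counting and proximity functions making up $T(r,\psi)$, rather than some uncontrolled error, is the delicate step; everything else is the routine local computation identifying the integrand and the bounded-potential estimate yielding $O(1)$.
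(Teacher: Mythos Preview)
Your opening identification of the integrand with $\psi^*\Psi\wedge\alpha^{m-1}$ is correct and is exactly how the paper begins. The divergence comes in the second step, and there is a genuine gap: your claim that $\Psi=dd^c\Phi$ for a \emph{bounded} potential $\Phi$ built from $\arctan(\log|\zeta|)$ is false. Solving $dd^c\Phi=\Psi$ radially on $\CC^*$ one finds $r\Phi'(r)=\tfrac{2}{\pi}\arctan(\log r)+C$, so that
\[
\Phi(r)=\tfrac{2}{\pi}\Big(\log r\cdot\arctan(\log r)-\tfrac12\log(1+\log^2 r)\Big)+C\log r+C',
\]
which behaves like $-\log r$ near $0$ and like $\log r$ near $\infty$; no choice of the harmonic correction $C\log r$ makes $\Phi$ bounded at both ends. (This is forced: $\Psi>0$ with total mass $1$ on $\PP^1(\CC)$, so a global bounded potential is ruled out by the maximum principle.) Consequently your Jensen--Dynkin argument does not produce an $O(1)$ boundary term, and the subsequent sentence---that the boundary is $O(1)$ while singular residues separately assemble into $m(r,\psi)$ and $N(r,\psi)$---is internally inconsistent.

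The paper bypasses potentials entirely. After the same pointwise identity, it applies Fubini's theorem to the integral $\frac{\pi^m}{(m-1)!}\int_{B(r)}g_r(o,x)\,\psi^*\Psi\wedge\alpha^{m-1}$, slicing over the target to obtain
\[
\int_{\PP^1(\CC)}\Psi(\zeta)\;\frac{\pi^m}{(m-1)!}\int_{\psi^{-1}(\zeta)\cap B(r)}g_r(o,x)\,\alpha^{m-1}
=\int_{\PP^1(\CC)}N\!\left(r,\frac{1}{\psi-\zeta}\right)\Psi(\zeta),
\]
and then bounds each fibre term by $T(r,\psi)+O(1)$ via the First Main Theorem, using $\int_{\PP^1(\CC)}\Psi=1$ to conclude. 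This Ahlfors--Shimizu/Fubini route is short, avoids any analysis of singular potentials, and yields the sharp constant $1$ in front of $T(r,\psi)$. Your approach can be salvaged---using that $|\Phi(\zeta)|\le C(1+|\log|\zeta||)$ the boundary term is controlled by $m(r,\psi)+m(r,1/\psi)\le 2T(r,\psi)+O(1)$---but only after correcting the boundedness claim and carefully tracking the Dirac contributions of $dd^c[\Phi]$ at $0$ and $\infty$, and even then you lose the sharp constant.
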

\begin{proof}  It is not hard to deduce that   
$$\frac{\|\nabla\psi\|^2}{|\psi|^2(1+\log^2|\psi|)}=4m\pi\frac{\psi^*\Psi\wedge\alpha^{m-1}}{\alpha^m}.$$
 Using     
 Fubini's theorem, we conclude that  
\begin{eqnarray*}
&& \frac{1}{4\pi}\int_{B(r)}g_r(o,x)\frac{\|\nabla\psi\|^2}{|\psi|^2(1+\log^2|\psi|)}dv \\ 
&=&m\int_{B(r)}g_r(o,x)\frac{\psi^*\Psi\wedge\alpha^{m-1}}{\alpha^m}dv  \\
&=&\frac{\pi^m}{(m-1)!}\int_{\mathbb P^1(\mathbb C)}\Psi(\zeta)\int_{\psi^*\zeta\cap B(r)}g_r(o,x)\alpha^{m-1} \\
&=&\int_{\mathbb P^1(\mathbb C)}N\Big(r, \frac{1}{\psi-\zeta}\Big)\Psi(\zeta)  \\
&\leq&\int_{\mathbb P^1(\mathbb C)}\big{(}T(r,\psi)+O(1)\big{)}\Psi \\
&=& T(r,\psi)+O(1). 
\end{eqnarray*}
\end{proof}

\begin{lemma}\label{999a}  Let
$\psi\not\equiv0$ be a  meromorphic function on  $M.$  Then for any   $\delta>0,$ there exists a subset 
 $E_\delta\subset(1,\infty)$ of finite Lebesgue measure such that
\begin{eqnarray*}
  &&  \int_{\partial B(r)}\log^+\frac{\|\nabla\psi\|^2}{|\psi|^2(1+\log^2|\psi|)}d\pi_r \\
   &\leq& (1+\delta)^2 \log^+T(r,\psi)+O\left(\sqrt{-\kappa(r)}r+\delta\log r\right)
   \end{eqnarray*}
 holds for  all $r>1$ outside  $E_\delta,$ where $\kappa(r)$ is defined by $(\ref{ricci}).$      
\end{lemma}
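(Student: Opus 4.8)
The plan is to assemble the two preparatory results already proved, Corollary \ref{cal1} and Lemma \ref{oo12}, together with a Jensen-type inequality, in the standard way that the logarithmic derivative lemma is built in Nevanlinna theory. The point is that the left-hand side is the boundary \emph{average} of $\log^+ k$ for
$$k=\frac{\|\nabla\psi\|^2}{|\psi|^2(1+\log^2|\psi|)},$$
while Corollary \ref{cal1} controls $\log^+$ of the boundary \emph{integral} of $k$, and Lemma \ref{oo12} controls the interior integral of $g_r k$ by $T(r,\psi)$.

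First I would verify that $k\geq 0$ meets the hypotheses of Corollary \ref{cal1}. Inspecting $k$ in a local holomorphic coordinate near a zero or pole of $\psi$, where it grows only like $1/(|z|^2\log^2|z|)$, shows that $k$ is locally integrable on $M$; away from the zeros and poles it is smooth. The one genuinely technical point is local boundedness of $k$ at the reference point $o$. Since the asserted asymptotic statement is unaffected by the choice of $o$, I would simply arrange that $\psi(o)\neq 0,\infty$ with $d\psi(o)$ finite, so that $k$ is smooth and bounded near $o$ and Corollary \ref{cal1} applies.

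Next I would chain the two estimates. Applying Corollary \ref{cal1} to this $k$ produces a set $E_\delta$ of finite measure outside which
$$\log^+\int_{\partial B(r)}k\,d\pi_r \leq (1+\delta)^2\log^+\int_{B(r)}g_r(o,x)k\,dv + O\left(\sqrt{-\kappa(r)}r+\delta\log r\right).$$
Lemma \ref{oo12} bounds the interior integral by $\int_{B(r)}g_r(o,x)k\,dv\leq 4\pi\bigl(T(r,\psi)+O(1)\bigr)$, so that $\log^+\int_{B(r)}g_r(o,x)k\,dv\leq \log^+T(r,\psi)+O(1)$, the constant $4\pi$ being absorbed into the $O(1)$. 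Substituting gives
$$\log^+\int_{\partial B(r)}k\,d\pi_r \leq (1+\delta)^2\log^+T(r,\psi)+O\left(\sqrt{-\kappa(r)}r+\delta\log r\right).$$

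Finally I would pass from the logarithm of the boundary integral to the boundary integral of the logarithm, which is the only step where the probabilistic normalization of $\pi_r$ is used. Since the constant function $1$ is harmonic, the harmonic measure $\pi_r$ is a probability measure on $\partial B(r)$, and since $t\mapsto\log(1+t)$ is concave, Jensen's inequality yields
$$\int_{\partial B(r)}\log^+k\,d\pi_r \leq \int_{\partial B(r)}\log(1+k)\,d\pi_r \leq \log\left(1+\int_{\partial B(r)}k\,d\pi_r\right) \leq \log^+\int_{\partial B(r)}k\,d\pi_r + \log 2.$$
Combining the last two displays gives the claim. The main obstacle is not any single estimate but the bookkeeping that verifies the hypotheses of Corollary \ref{cal1} for $k$ — especially its local boundedness at $o$ — after which the argument is a routine concatenation of already-established lemmas, with the Jensen step supplying the passage to the averaged form.
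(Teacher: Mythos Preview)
Your proposal is correct and follows essentially the same route as the paper: Jensen's inequality (concavity of $\log$) to pass from $\int_{\partial B(r)}\log^+k\,d\pi_r$ to $\log^+\int_{\partial B(r)}k\,d\pi_r+O(1)$, then the Calculus Lemma (Corollary~\ref{cal1}) together with Lemma~\ref{oo12} to bound the latter. The only cosmetic difference is that the paper applies Jensen first and you apply it last, and you are more explicit than the paper in checking the local-integrability and local-boundedness-at-$o$ hypotheses needed to invoke Corollary~\ref{cal1}.
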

\begin{proof} The concavity of $``\log"$ implies that    
\begin{eqnarray*}
&&  \int_{\partial B(r)}\log^+\frac{\|\nabla\psi\|^2}{|\psi|^2(1+\log^2|\psi|)}d\pi_r  \\
    &\leq&  \log^+\int_{\partial B(r)}\frac{\|\nabla\psi\|^2}{|\psi|^2(1+\log^2|\psi|)}d\pi_r+O(1).
\end{eqnarray*}
 Apply  Theorem \ref{cal} and Lemma \ref{oo12} to the first term on the right hand side of the above inequality, 
then  for any $\delta>0,$ there exists a subset 
 $E_\delta\subset(1,\infty)$ of finite Lebesgue measure such that  this term is bounded from above by 
    $$(1+\delta)^2 \log^+T(r,\psi)
    +O\left(\sqrt{-\kappa(r)}r+\delta\log r\right)$$
  for all $r>1$ outside  $E_\delta.$ This completes the proof. 
\end{proof}
Define
$$m\left(r,\frac{\|\nabla\psi\|}{|\psi|}\right)=\int_{\partial B(r)}\log^+\frac{\|\nabla\psi\|}{|\psi|}d\pi_r.$$

  We establish a logarithmic derivative lemma: 

\begin{theorem}[Logarithmic Derivative  Lemma]\label{log} Let
$\psi\not\equiv0$ be a  meromorphic function on  $M.$   Then for any   $\delta>0,$ there exists a  subset  $E_\delta\subset(1,\infty)$ of  finite Lebesgue measure such that 
\begin{eqnarray*}
   m\Big(r,\frac{\|\nabla\psi\|}{|\psi|}\Big)&\leq& \frac{2+(1+\delta)^2}{2}\log^+ T(r,\psi) 
    +O\left(\sqrt{-\kappa(r)}r+\delta\log r\right)
\end{eqnarray*}
 holds for   $r>1$ outside  $E_\delta,$ where $\kappa(r)$ is defined by $(\ref{ricci}).$     
\end{theorem}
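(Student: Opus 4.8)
The plan is to reduce the statement to Lemma \ref{999a} by an elementary algebraic splitting, leaving only a lower-order ``correction term'' that I would control through the first main theorem. First I would write
$$\frac{\|\nabla\psi\|^2}{|\psi|^2}=\frac{\|\nabla\psi\|^2}{|\psi|^2(1+\log^2|\psi|)}\cdot\bigl(1+\log^2|\psi|\bigr),$$
and combine the subadditivity $\log^+(ab)\leq\log^+a+\log^+b$ with the identity $\log^+\frac{\|\nabla\psi\|}{|\psi|}=\frac12\log^+\frac{\|\nabla\psi\|^2}{|\psi|^2}$ to obtain the pointwise bound
$$\log^+\frac{\|\nabla\psi\|}{|\psi|}\leq\frac12\log^+\frac{\|\nabla\psi\|^2}{|\psi|^2(1+\log^2|\psi|)}+\frac12\log^+\bigl(1+\log^2|\psi|\bigr).$$
Integrating against $d\pi_r$ over $\partial B(r)$ then splits $m(r,\|\nabla\psi\|/|\psi|)$ into two pieces, to be estimated separately.

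For the first piece I would invoke Lemma \ref{999a} directly: for any $\delta>0$ there is a finite-measure exceptional set $E_\delta$ outside which
$$\frac12\int_{\partial B(r)}\log^+\frac{\|\nabla\psi\|^2}{|\psi|^2(1+\log^2|\psi|)}d\pi_r\leq\frac{(1+\delta)^2}{2}\log^+T(r,\psi)+O\bigl(\sqrt{-\kappa(r)}\,r+\delta\log r\bigr).$$
This is where all the genuine difficulty already sits, since Lemma \ref{999a} packages the Calculus Lemma (Theorem \ref{cal}) together with its Green-function estimates; in particular the curvature contribution $\sqrt{-\kappa(r)}\,r$ is inherited from the bound on $\log^+K(r,\delta)$ in Corollary \ref{cal1}.

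The remaining task is to show that the correction term $\frac12\int_{\partial B(r)}\log^+(1+\log^2|\psi|)d\pi_r$ contributes only $\log^+T(r,\psi)+O(1)$, with no further exceptional set. Using $1+\log^2 t\leq(1+|\log t|)^2$ I get $\log^+(1+\log^2|\psi|)\leq 2\log\bigl(1+|\log|\psi||\bigr)$; and since $|\log|\psi||=\log^+|\psi|+\log^+\frac{1}{|\psi|}$ while $\pi_r$ is a probability measure, Jensen's inequality (concavity of $\log$) gives
$$\int_{\partial B(r)}\log\bigl(1+|\log|\psi||\bigr)d\pi_r\leq\log\Bigl(1+m(r,\psi)+m\bigl(r,\tfrac1\psi\bigr)\Bigr).$$
By the first main theorem one has $m(r,\psi)\leq T(r,\psi)$ and $m(r,1/\psi)\leq T(r,1/\psi)=T(r,\psi)+O(1)$, so the right-hand side is at most $\log^+T(r,\psi)+O(1)$. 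Adding the two pieces produces exactly the coefficient $\frac{2+(1+\delta)^2}{2}$ in front of $\log^+T(r,\psi)$. I expect the main obstacle here to be conceptual rather than computational: nearly everything hard has been offloaded into Lemma \ref{999a}, so the only real care needed is to ensure that the elementary correction term does not inflate the error beyond $O\bigl(\sqrt{-\kappa(r)}\,r+\delta\log r\bigr)$, which the first-main-theorem estimate on $m(r,\psi)$ and $m(r,1/\psi)$ guarantees.
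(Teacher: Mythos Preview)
Your proposal is correct and follows essentially the same route as the paper: split $\log^+\frac{\|\nabla\psi\|}{|\psi|}$ via the factor $1+\log^2|\psi|$, apply Lemma~\ref{999a} to the main piece, and bound the remaining $\tfrac12\int\log(1+\log^2|\psi|)\,d\pi_r$ by $\log^+T(r,\psi)+O(1)$ using concavity of $\log$ together with $m(r,\psi)+m(r,1/\psi)\le 2T(r,\psi)+O(1)$. Your treatment of the correction term is in fact slightly more explicit than the paper's, but the argument and the resulting constant $\tfrac{2+(1+\delta)^2}{2}$ are the same.
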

\begin{proof}  We have 
\begin{eqnarray*}
    m\left(r,\frac{\|\nabla\psi\|}{|\psi|}\right)  
   &\leq& \frac{1}{2}\int_{\partial B(r)}\log^+\frac{\|\nabla\psi\|^2}{|\psi|^2(1+\log^2|\psi|)}d\pi_r   \\ 
 &&   +\frac{1}{2}\int_{\partial B(r)}\log\left(1+\log^2|\psi|\right)d\pi_r \\
        &\leq&  \frac{1}{2}\int_{\partial B(r)}\log^+\frac{\|\nabla\psi\|^2}{|\psi|^2(1+\log^2|\psi|)}d\pi_r  \\ 
   && +\log\int_{\partial B(r)}\Big{(}\log^+|\psi|+\log^+\frac{1}{|\psi|}\Big{)}d\pi_r +O(1)  \\
   &\leq& \frac{1}{2}\int_{\partial B(r)}\log^+\frac{\|\nabla\psi\|^2}{|\psi|^2(1+\log^2|\psi|)}d\pi_r+\log^+T(r,\psi).
\end{eqnarray*}
Apply  Lemma \ref{999a} again,  we have the theorem proved.
   \end{proof}

\section{Proofs of  Theorem \ref{main1} and Corollary \ref{main2}}

Let $E=\sum_j\mu_jE_j$ be a divisor, where  $E_j^,s$ are prime divisors.  The reduced form of $E$ is  defined by    
$${\rm Red}(E):=\sum_jE_j.$$

\emph{Proof of Theorem $\ref{main1}$}

 Write $D=D_1+\cdots+D_q$ as  the irreducible decomposition of $D.$
Equipping every holomorphic line bundle $\mathscr O(D_j)$ with a Hermitian
metric $h_j$ such that it  induces  the  Hermitian metric $h=h_1\otimes\cdots\otimes h_q$ on $L.$ 
Pick $s_j\in H^0(X, \mathscr O(D_j)$
such that  $(s_j)=D_j$ and $\|s_j\|<1.$
On $X,$ define a singular volume form
$$
  \Phi=\frac{\wedge^nc_1(L,h)}{\prod_{j=1}^q\|s_j\|^2}.$$
Set
$$f^*\Phi\wedge\alpha^{m-n}=\xi\alpha^m.$$
It is not hard to deduce     
$$dd^c[\log\xi]\geq f^*c_1(L, h_L)-f^*{\rm{Ric}}(\Omega)+\mathscr{R}-\left[{\rm{Red}}(f^*D)\right]$$
in the sense of currents.  
Hence, it  yields   that
\begin{eqnarray}\label{5q}
&& \frac{1}{4}\int_{B(r)}g_r(o,x)\Delta\log\xi dv \\
&\geq& T_{f}(r,L)+T_{f}(r,K_X)+T(r,\mathscr{R})-\overline{N}_{f}(r,D).  \nonumber
\end{eqnarray}
 \ \ \ \   On the other hand,   since $D$  only  has  simple normal crossings, then there exist  
  a finite  open covering $\{U_\lambda\}$ of $X,$ and finitely many   rational functions
$w_{\lambda1},\cdots,w_{\lambda n}$ on $X$ for each $\lambda,$  such that all $w_{\lambda j}^,s$ are holomorphic on $U_\lambda$  and     
\begin{eqnarray*}
  dw_{\lambda1}\wedge\cdots\wedge dw_{\lambda n}(x)\neq0, & & \ \ \    ^\forall x\in U_{\lambda}; \\
  D\cap U_{\lambda}=\big{\{}w_{\lambda1}\cdots w_{\lambda h_\lambda}=0\big{\}}, && \ \ \   ^\exists h_{\lambda}\leq n.
\end{eqnarray*}
In addition, we  can require that  $\mathscr O(D_j)|_{U_\lambda}\cong U_\lambda\times \mathbb C$ for 
all $\lambda,j.$ On  $U_\lambda,$   write 
$$\Phi=\frac{e_\lambda}{|w_{\lambda1}|^2\cdots|w_{\lambda h_{\lambda}}|^2}
\bigwedge_{k=1}^n\frac{\sqrt{-1}}{\pi}dw_{\lambda k}\wedge d\bar w_{\lambda k},$$
where  $e_\lambda$ is a  positive smooth function on $U_\lambda.$  
Let  $\{\phi_\lambda\}$ be a partition of the unity subordinate to $\{U_\lambda\}.$ Set 
$$\Phi_\lambda=\frac{\phi_\lambda e_\lambda}{|w_{\lambda1}|^2\cdots|w_{\lambda h_{\lambda}}|^2}
\bigwedge_{k=1}^n\frac{\sqrt{-1}}{\pi}dw_{\lambda k}\wedge d\bar w_{\lambda k}.$$
 Again, put $f_{\lambda k}=w_{\lambda k}\circ f.$  On  $f^{-1}(U_\lambda),$ we have  
\begin{eqnarray*}
 f^*\Phi_\lambda&=&
   \frac{\phi_{\lambda}\circ f\cdot e_\lambda\circ f}{|f_{\lambda1}|^2\cdots|f_{\lambda h_{\lambda}}|^2}
   \bigwedge_{k=1}^n\frac{\sqrt{-1}}{\pi}df_{\lambda k}\wedge d\bar f_{\lambda k} \\
   &=& \phi_{\lambda}\circ f\cdot e_\lambda\circ f\sum_{1\leq i_1\not=\cdots\not= i_n\leq m}
   \frac{\Big|\frac{\partial f_{\lambda1}}{\partial z_{i_1}}\Big|^2}{|f_{\lambda 1}|^2}\cdots 
   \frac{\Big|\frac{\partial f_{\lambda h_\lambda}}{\partial z_{i_{h_\lambda}}}\Big|^2}{|f_{\lambda h_\lambda}|^2}
   \left|\frac{\partial f_{\lambda (h_\lambda+1)}}{\partial z_{i_{h_\lambda+1}}}\right|^2 \\
 && \ \ \ \ \ \ \ \ \ \  \  \   \cdots\left|\frac{\partial f_{\lambda n}}{\partial z_{i_{n}}}\right|^2 
    \Big(\frac{\sqrt{-1}}{\pi}\Big)^ndz_{i_1}\wedge d\bar z_{i_1}\wedge\cdots\wedge dz_{i_n}\wedge d\bar z_{i_n}.
\end{eqnarray*}
 Fix an arbitrary   $x_0\in M.$  We  take  holomorphic local  coordinates $z_1,\cdots,z_m$ near $x_0$ and   holomorphic local  coordinates
  $\zeta_1,\cdots,\zeta_n$ 
near $f(x_0)$ such that
$$ \alpha|_{x_0}=\frac{\sqrt{-1}}{\pi}\sum_{j=1}^m dz_j\wedge d\bar{z}_j$$
and 
 $$c_1(L, h)\big|_{f(x_0)}=\frac{\sqrt{-1}}{\pi}\sum_{j=1}^n d\zeta_j\wedge d\bar{\zeta}_j.$$
 Set   
$$f^*\Phi_\lambda\wedge\alpha^{m-n}=\xi_\lambda\alpha^m.$$
Then, we have  $\xi=\sum_\lambda \xi_\lambda$ and   
 \begin{eqnarray*}
&& \xi_\lambda|_{x_0} \\ 
&=& \phi_{\lambda}\circ f\cdot e_\lambda\circ f\sum_{1\leq i_1\not=\cdots\not= i_n\leq m}
   \frac{\Big|\frac{\partial f_{\lambda1}}{\partial z_{i_1}}\Big|^2}{|f_{\lambda 1}|^2}\cdots 
   \frac{\Big|\frac{\partial f_{\lambda h_\lambda}}{\partial z_{i_{h_\lambda}}}\Big|^2}{|f_{\lambda h_\lambda}|^2}
   \left|\frac{\partial f_{\lambda (h_\lambda+1)}}{\partial z_{i_{h_\lambda+1}}}\right|^2\cdots\left|\frac{\partial f_{\lambda n}}{\partial z_{i_{n}}}\right|^2 \\
   &\leq&  \phi_{\lambda}\circ f\cdot e_\lambda\circ f\sum_{1\leq i_1\not=\cdots\not= i_n\leq m}
    \frac{\big\|\nabla f_{\lambda1}\big\|^2}{|f_{\lambda 1}|^2}\cdots 
   \frac{\big\|\nabla f_{\lambda h_\lambda}\big\|^2}{|f_{\lambda h_\lambda}|^2} \\
   &&\ \ \ \ \ \ \ \ \ \ \ \ \ \ \ \ \ \ \ \ \ \ \ \ \ \  \  \ \ \   \cdot \big\|\nabla f_{\lambda(h_\lambda+1)}\big\|^2\cdots\big\|\nabla f_{\lambda n}\big\|^2.
\end{eqnarray*} 
Define a non-negative function $\varrho$ on $M$ by  
\begin{equation}\label{wer}
  f^*c_1(L, h)\wedge\alpha^{m-1}=\varrho\alpha^m.
\end{equation}
Again, put  $f_j=\zeta_j\circ f$ with  $j=1,\cdots,n.$  Then, we have      
\begin{equation*}
    f^*c_1(L, h)\wedge\alpha^{m-1}\big|_{x_0}=\frac{(m-1)!}{2}\sum_{j=1}^m\big\|\nabla f_j\big\|^2\alpha^m, 
\end{equation*}
which yields that   
$$\varrho|_{x_0}=(m-1)!\sum_{i=1}^n\sum_{j=1}^m\Big|\frac{\partial f_i}{\partial z_j}\Big|^2
=\frac{(m-1)!}{2}\sum_{j=1}^n\big\|\nabla f_j\big\|^2.$$
 Put together   the above, we are led to 
$$\xi_\lambda\leq 
\frac{ \phi_{\lambda}\circ f\cdot e_\lambda\circ f\cdot(2\varrho)^{n-h_\lambda}}{(m-1)!^{n-h_\lambda}}\sum_{1\leq i_1\not=\cdots\not= i_n\leq m}
    \frac{\big\|\nabla f_{\lambda1}\big\|^2}{|f_{\lambda 1}|^2}\cdots 
   \frac{\big\|\nabla f_{\lambda h_\lambda}\big\|^2}{|f_{\lambda h_\lambda}|^2}
$$
on $f^{-1}(U_\lambda).$
Since $\phi_\lambda\circ f\cdot e_\lambda\circ f$ is bounded on $M$ and   
$$\log^+\xi\leq \sum_\lambda\log^+\xi_\lambda+O(1),$$
 we obtain  
\begin{equation}\label{bbd}
   \log^+\xi\leq O\left(\log^+\varrho+\sum_{k, \lambda}\log^+\frac{\|\nabla f_{\lambda k}\|}{|f_{\lambda k}|}+1\right). 
 \end{equation}  
  Jensen-Dynkin formula yields  that 
\begin{equation*}
 \frac{1}{2}\int_{B(r)}g_r(o,x)\Delta\log\xi dv
=\int_{\partial B(r)}\log\xi d\pi_r+O(1).
\end{equation*}
Combining this with (\ref{bbd}) and Theorem \ref{log} to get  
\begin{eqnarray*}
&& \frac{1}{4}\int_{B(r)}g_r(o,x)\Delta\log\xi dv\\
   &\leq& O\left(\sum_{k,\lambda}m\left(r,\frac{\|\nabla f_{\lambda k}\|}{|f_{\lambda k}|}\right)+\log^+\int_{\partial B(r)}\varrho d\pi_r+1\right) \\
   &\leq& O\left(\sum_{k,\lambda}\log^+ T(r,f_{\lambda k})+\log^+\int_{\partial B(r)}\varrho d\pi_r+
    \sqrt{-\kappa(r)}r+\delta\log r\right) \\
      &\leq& O\left(\log^+ T_f(r,L)+\log^+\int_{\partial B(r)}\varrho d\pi_r+
     \sqrt{-\kappa(r)}r+\delta\log r\right). 
   \end{eqnarray*}
Using Theorem \ref{cal} and (\ref{wer}),  for any $\delta>0,$ there exists a subset $E_\delta\subset(0,\infty)$ of finite Lebesgue measure such that 
   \begin{eqnarray*}
 \log^+\int_{\partial B(r)}\varrho d\pi_r 
   &\leq&  (1+\delta)^2\log^+ T_f(r,L)+
     O\left(\sqrt{-\kappa(r)}r+\delta\log r\right)
      \end{eqnarray*}
   holds for  all $r>1$ outside $E_\delta.$  
It is therefore   
\begin{eqnarray*}\label{6q}
     \frac{1}{4}\int_{B(r)}g_r(o,x)\Delta\log\xi dv 
 &\leq&  O\left(\log^+ T_f(r,L)+
      \sqrt{-\kappa(r)}r+\delta\log r\right) 
\end{eqnarray*}
 for  $r>1$ outside $E_\delta.$  Combining  this with   (\ref{5q}),  we prove the theorem. Q.E.D.

The following    volume  comparison theorem is due  to Bishop-Gromov (see, e.g., \cite{B, S-Y}). 

\begin{lemma}\label{comp}  Let $N$ be a complete Riemannian manifold.  Let $B(x, r)$ denote  the geodesic ball centered at $x$ with radius $r$ in $N.$  If ${\rm{Ric}}(N)\geq(n-1)\kappa$ for some constant $\kappa,$  then 
$${\rm{Area}}\left(\partial B(x, r)\right)\leq {\rm{Area}}\left(\partial B(\kappa, r)\right)$$
and thus 
$${\rm{Vol}}\left(B(x, r)\right)\leq {\rm{Vol}}\left(B(\kappa, r)\right),$$
where $B(\kappa, r)$ denotes  a  geodesic ball with radius $r$ in the space form with  sectional curvature $\kappa,$ of  the same dimension as one of $N.$  
\end{lemma}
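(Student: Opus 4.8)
The plan is to work in geodesic polar coordinates centered at $x$ and to compare, radius by radius and direction by direction, the area density of $N$ with that of the model space form, following the classical scheme of Bishop and Gromov. Writing $\exp_x$ for the exponential map, for a unit vector $\theta\in T_xN$ let $A(r,\theta)$ denote the Jacobian of $\exp_x$ in the direction $\theta$ at radius $r,$ so that the volume element is $dv=A(r,\theta)\,dr\,d\theta$ inside the cut locus, with the convention $A(r,\theta)=0$ once $r$ exceeds the cut distance $c(\theta)$ along $t\mapsto\exp_x(t\theta).$ Then ${\rm{Area}}(\partial B(x,r))=\int_{S^{n-1}}A(r,\theta)\,d\theta$ and ${\rm{Vol}}(B(x,r))=\int_0^r\int_{S^{n-1}}A(s,\theta)\,d\theta\,ds.$ The corresponding model density is $A_\kappa(r)={\rm{sn}}_\kappa(r)^{n-1},$ where ${\rm{sn}}_\kappa$ solves ${\rm{sn}}_\kappa''+\kappa\,{\rm{sn}}_\kappa=0$ with ${\rm{sn}}_\kappa(0)=0,$ ${\rm{sn}}_\kappa'(0)=1,$ and the analogous integral identities hold in the space form. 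Thus everything reduces to the pointwise estimate $A(r,\theta)\le A_\kappa(r).$

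To obtain it, I would introduce the mean curvature $m(r,\theta)=\partial_r\log A(r,\theta)$ of the geodesic sphere and recall that the shape operator $S$ of the geodesic spheres satisfies the matrix Riccati equation $S'+S^2+R_{\partial_r}=0$ along each radial geodesic, where $R_{\partial_r}$ is the curvature operator $Y\mapsto R(Y,\partial_r)\partial_r.$ Taking traces gives $m'+|S|^2+{\rm{Ric}}(\partial_r,\partial_r)=0,$ and the Cauchy--Schwarz inequality $|S|^2\ge m^2/(n-1)$ together with the hypothesis ${\rm{Ric}}(N)\ge(n-1)\kappa$ yields the scalar inequality $m'+\frac{m^2}{n-1}\le-(n-1)\kappa.$ In the model the same quantities satisfy this relation with equality, with $m_\kappa(r)=(n-1)\,{\rm{sn}}_\kappa'(r)/{\rm{sn}}_\kappa(r).$ A Sturm--Riccati comparison (using the common asymptotics $m(r,\theta)\sim(n-1)/r\sim m_\kappa(r)$ as $r\to0$) then gives $m(r,\theta)\le m_\kappa(r)$ for all $r<c(\theta).$ Consequently $\partial_r\log\big(A(r,\theta)/A_\kappa(r)\big)=m(r,\theta)-m_\kappa(r)\le0,$ so the ratio $A(r,\theta)/A_\kappa(r)$ is non-increasing and tends to $1$ as $r\to0,$ whence $A(r,\theta)\le A_\kappa(r).$

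With the pointwise bound in hand, integrating over the unit sphere $S^{n-1}\subset T_xN$ gives ${\rm{Area}}(\partial B(x,r))=\int_{S^{n-1}}A(r,\theta)\,d\theta\le\int_{S^{n-1}}A_\kappa(r)\,d\theta={\rm{Area}}(\partial B(\kappa,r)),$ where on the left the convention $A(r,\theta)=0$ for $r\ge c(\theta)$ only strengthens the inequality. Integrating this over $s\in(0,r)$ then delivers the volume bound ${\rm{Vol}}(B(x,r))\le{\rm{Vol}}(B(\kappa,r)).$

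The main obstacle I expect is the careful treatment of the cut locus and conjugate points: geodesic polar coordinates are a diffeomorphism only inside the cut locus, and $m(r,\theta)$ blows up as $r\uparrow c(\theta)$ when a conjugate point is reached. The clean way to handle this is to run the Riccati comparison only on $(0,c(\theta)),$ where $A(r,\theta)>0$ and everything is smooth, and then to extend $A(r,\theta)$ by zero; this is legitimate because the cut locus has measure zero and setting the density to zero can only decrease the area integral. This is precisely the point at which the stated inequalities, rather than equalities, arise, and the monotonicity of $A/A_\kappa$ survives across $c(\theta)$ since it is non-increasing up to the cut value and the density then drops to $0.$
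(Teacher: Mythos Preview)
Your argument is the standard Bishop--Gromov proof and is correct: polar coordinates, the Riccati equation for the shape operator, the trace inequality $|S|^2\ge m^2/(n-1)$, comparison of the resulting scalar Riccati inequality with the model, and integration over $S^{n-1}$ and then in $r$. Your handling of the cut locus is also the right one.

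There is, however, nothing to compare against: the paper does not prove this lemma. It is stated as the classical Bishop--Gromov volume comparison theorem with a citation to \cite{B, S-Y}, and no argument is supplied. So your write-up goes well beyond what the paper does here; it is essentially the proof one finds in the cited references. One small remark: for $\kappa>0$ you should note that the comparison is only asserted for $r$ up to the first zero of ${\rm sn}_\kappa$ (equivalently, by Myers' theorem the cut distance $c(\theta)$ is at most $\pi/\sqrt{\kappa}$, so the inequality is vacuous beyond that). This is not needed for the paper's application, which concerns $\kappa\le0$, but it is part of the full statement.
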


\begin{remark}\label{remk} Let $B(\kappa, r)$ be denoted  as  in Lemma \ref{comp}.  The  following fact is well known: 
$${\rm{Area}}\left(\partial B(\kappa, r)\right)=\left\{
                \begin{array}{ll}
               \omega_{n-1}\chi(\sqrt{-\kappa}, r)^{n-1}, \  \   & \kappa\leq0; \\
                  \omega_{n-1}\Big(\frac{\sin\sqrt{\kappa}r}{\sqrt{\kappa}}\Big)^{n-1},  \ \     & \kappa>0, \\
                \end{array}
              \right. $$
where $\omega_{n-1}$ denotes  the standard Euclidean area of the unit sphere in $\mathbb R^n,$ and $\chi(s,t)$ is defined by $(\ref{chi}).$
\end{remark}

Let $N$ be a complete non-compact Riemannian manifold with a pole $o,$ of dimension $n.$  Assume that the sectional curvature $K$ and the  Ricci curvature ${\rm{Ric}}$ of $N$ satisfy  that 
\begin{equation*}
 K\leq-\sigma, \ \ \ \ {\rm{Ric}}\geq-(n-1)\tau
 \end{equation*}
 for  some  constants $\sigma,\tau\geq0.$ It is evident that   $\sigma\leq\tau.$
Let $G(o,x)$ denote the minimal positive Green function of $\Delta/2$ for $N$ with a pole at $o,$ where   $\Delta$ is the  Laplace-Beltrami operator on $N.$ 

Let $G(t), H(t)$  solve    the following  Jacobi equations  on $[0,\infty)$:
 $$G''-\sigma G=0; \ \ \ \    G(0)=0, \ \ \ \  G'(0)=1;$$ 
 $$H''-\tau H=0; \ \ \ \    H(0)=0, \ \ \ \  H'(0)=1,$$
respectively.
 Employing  Laplacian comparison theorem \cite{Greene}, we deduce that   (see \cite{Ka0, Sa0} also)   
 \begin{equation}\label{greenn}
\frac{2}{\omega_{2m-1}}\int_{\rho(x)}^\infty H(t)^{1-2m}dt\leq G(o,x)\leq \frac{2}{\omega_{2m-1}}\int_{\rho(x)}^\infty G(t)^{1-2m}dt,
\end{equation} 
where  $\rho(x)$ is the Riemannian distance function of  $x$ form $o,$ and  $\omega_{n-1}$ is  the standard  Euclidean  area  of   the unit sphere in $\mathbb R^n.$

\emph{Proof of  Corollary $\ref{main2}$}

By  curvature assumption, we have  $\kappa(r)\equiv\kappa.$  Thus,  according to Theorem \ref{main1},  it suffices to show that 
\begin{equation}\label{final}
|T(r, \mathscr R)|\leq O\left(\sqrt{-\kappa}r\right).
\end{equation}
Let $s$ denote  the scalar curvature of $M.$ Then 
$$s=-\frac{1}{2}\Delta\log\det(g_{i\bar j}),$$
which is a non-positive constant.  
Let $G(o,x)$ be the minimal positive Green function of $\Delta/2$ for $M$ with a pole at $o.$ 
Let us consider     the following  Jacobi equation on $[0,\infty)$:
 $$G''+\kappa G=0; \ \ \ \    G(0)=0, \ \ \ \  G'(0)=1.$$
 It is uniquely  solved by $G(t)=\chi(\sqrt{-\kappa}, t).$ Since  $M$ has  non-positive constant sectional curvature $\kappa,$  
 it yields  from (\ref{greenn}) that 
     \begin{eqnarray*}
G(o,x)&=&\frac{2}{\omega_{2m-1}}\int_{\rho(x)}^\infty G(t)^{1-2m}dt \\
&=&\frac{2}{\omega_{2m-1}}\int_{\rho(x)}^\infty \chi(\sqrt{-\kappa}, t)^{1-2m}dt.
     \end{eqnarray*}
   By  
   $$G(o,x)\big|_{\partial B(r)}=\frac{2}{\omega_{2m-1}}\int_{r}^\infty \chi(\sqrt{-\kappa}, t)^{1-2m}dt,$$
it gives    
     \begin{eqnarray*}
     g_r(o,x)&=&G(o,x)-G(o,x)\big|_{\partial B(r)} \\
     &=&\frac{2}{\omega_{2m-1}}\int_{\rho(x)}^r \chi(\sqrt{-\kappa}, t)^{1-2m}dt.
          \end{eqnarray*}
Combined with  Lemma \ref{comp} and Remark \ref{remk}, we obtain     
     \begin{eqnarray*}
 |T(r,\mathscr R)| &=&  \frac{1}{4}\int_{B(r)}g_r(o,x)\Delta\log\det(g_{i\bar j})dv \\
 &\leq& -2s\int_{B(r)}g_r(o,x)dv \\
 &=& -2s\int_0^rdt\int_{\partial B(t)}g_r(o,x)d\sigma_t \\
 &=& -\frac{4s}{\omega_{2m-1}}\int_0^rdt\int_{t}^r \chi(\sqrt{-\kappa}, s)^{1-2m}ds\int_{\partial B(t)}d\sigma_t \\
 &\leq& -4s\int_0^r\chi(\sqrt{-\kappa}, t)^{2m-1}dt\int_t^r \chi(\sqrt{-\kappa}, s)^{1-2m}ds. 
     \end{eqnarray*}
If $\kappa=0,$ then (\ref{final}) holds clearly due to $\mathscr R=0.$  In the following,  one assumes   that $\kappa<0.$ 
 Since
     \begin{eqnarray*}
\int_t^r \chi(\sqrt{-\kappa}, s)^{1-2m}ds &\leq& O\left(\int_t^r e^{(1-2m)\sqrt{-\kappa}s}ds\right) \\
&\leq& O\left(e^{(1-2m)\sqrt{-\kappa}t}\right), 
     \end{eqnarray*}
we have 
     \begin{eqnarray*}
&& \int_0^r\chi(\sqrt{-\kappa}, t)^{2m-1}dt\int_t^r \chi(\sqrt{-\kappa}, s)^{1-2m}ds \\
&\leq& \frac{1}{(2\sqrt{-\kappa})^{2m-1}} \int_0^r e^{(2m-1)\sqrt{-\kappa}t}dt\int_t^r \chi(\sqrt{-\kappa}, s)^{1-2m}ds \\
&\leq& O\left(\int_0^rdt\right) \\
&=&O(r).
     \end{eqnarray*}
     Thus, we conclude that 
     $$|T(r, \mathscr R)|\leq O(r).$$
To sum up, we have shown that $(\ref{final})$ holds.  
This completes the proof. Q.E.D.
 


\vskip\baselineskip

\vskip\baselineskip

\end{document}